\numberwithin{equation}{section}
\newtheorem{example}{Example}[section]
\newtheorem{thm}{Theorem}[section]
\newtheorem{cor}{Corollary}[section]
\newtheorem{pro}{Proposition}[section]
\begin{document}
\markboth{R. Rajkumar and P. Devi}{}
\title{Coprime graph of subgroups of a group}

\author{R. Rajkumar\footnote{e-mail: {\tt rrajmaths@yahoo.co.in}},\ \ \
P. Devi\footnote{e-mail: {\tt pdevigri@gmail.com}}\\
{\footnotesize Department of Mathematics, The Gandhigram Rural Institute -- Deemed University,}\\ \footnotesize{Gandhigram -- 624 302, Tamil Nadu, India.}\\[3mm]
}
\date{}
%\address{dfdssd}
%\keywords{Example, Applied mathematics, Journal}
\maketitle

\maketitle

\begin{abstract}
Let $G$ be a group. We define the \emph{coprime graph of subgroups} of $G$, denoted by $\mathcal P(G)$, is a graph whose vertex set is the set of all proper subgroups of $G$, and two distinct vertices are adjacent if and only if the order of the corresponding subgroups are coprime. In this paper, we study some connections between algebraic properties
of a group and graph theoretic properties of its coprime graph.
\paragraph{Keywords:} Coprime graph, finite groups, connectedness, independence number, clique number, planar.
\paragraph{Mathematics Subject Classification:}05C25, 20K27, 05C69, 05C10.
\end{abstract}

\section{Introduction}
Graph theory provide tools to study the algebraic properties of algebraic structures. In particular, there are several graphs associated with groups to study some specific properties of groups, for instance, intersection graph of subgroups of groups, prime graph of groups, non-commuting graphs of groups and permutability graph of subgroups of groups (See \cite{abd}, \cite{hadi}, \cite{mogha}, \cite{raj}  and the references therein). In \cite{sat}, Sattanathan and Kala defined  the order prime graph of a group $G$, which is a graph having the set of all elements of $G$ as its vertices, and two distinct vertices are adjacent if and only if the orders of the corresponding subgroups are coprime. They have studied some properties of this graph. This graph was further investigated by Xuanlong Ma et al \cite{ma} and  Hamid Reza Dorbidi \cite{hamid}. They called the order prime graph of a group as the coprime graph of a group.

The relation of coprimeness of the orders of the subgroups of a group plays a significant role in the determination of the structural properties of that group. 
%For instance, if $H$ and $K$ are permutable finite subgroups of a group $G$, then $|HK|$ is maximum when $|H|$ and $|K|$ are coprime. 
Also the relation of  coprimeness is not transitive on the set of all proper subgroups of a group.   
In this paper, we define the \emph{coprime graph of subgroups} of $G$, denoted by $\mathcal P(G)$. It is a graph having all the proper subgroups of $G$ as its vertices, and two distinct vertices $H$ and $K$ are adjacent if and only if $|H|$ and $|K|$ are coprime.

%For example, figure~1 and 2 shows the coprime graph of $\mathbb Z_{pqr}$ and $S_3$ respectively.

%\begin{example}\normalfont
%Consider $\mathbb Z_{pqr}$. Let $H_i$, $i=1$, 2, $\ldots$, 6 be subgroups of $\mathbb Z_{pqr}$ of orders $p$, $q$, $r$, $pq$, $pr$, $qr$ respectively; also these are the only proper subgroups of $G$. Consider $S_3$. Let $\langle (123)\rangle$, $\langle (12)\rangle$, $\langle (13)\rangle$, $\langle (23)\rangle$ be the subgroups of $S_3$ of order $3$, 2, 2, 2 respectively; also these are the only proper subgroups of $S_3$. Now the order prime graph of subgroups of $\mathbb Z_{pqr}$ and $S_3$ are shown in Figures~1 and 2 respectively.
%\begin{figure}[ ht ]
%\begin{center}
%\begin{minipage}{.3\textwidth}
%\begin{center}
% \includegraphics[scale =0.6]{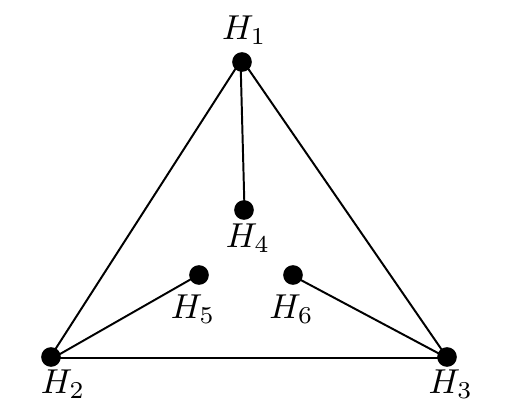}
%\caption{$\mathcal P(\mathbb Z_{pqr})$}
%\label{fig: f1}
%\end{center}
%\end{minipage}
%\begin{minipage}{.3\textwidth}
%\begin{center}
%\includegraphics[scale =0.8]{}
%\caption{$\mathcal P(S_3)$}
%\label{fig: f2}
%\end{center}
%\end{minipage}
%\end{center}
%\end{figure}
%\end{example}
Now we recall some basic definitions and notations of graph theory. We use the standard terminology of graphs (e.g., see \cite{harary}).
 Let $G$ be a simple graph. $G$ is said to be $k$-partite if the vertex set of $G$ can be partitioned to $k$ sets such that no two vertices in same partitions are adjacent. A complete $k$-partite graph, denoted by $K_{n_1, n_2, \cdots, n_k}$, is a $k$-partite graph having partition sizes $n_1, n_2, \cdots, n_k $ such that  every vertex in each partition is adjacent with all the vertices in the remaining partitions. In particular, $K_{1,n}$ is called a \emph{star}.    A graph whose edge set is empty is called a \emph{null} graph or \emph{totally disconnected} graph.  $K_n$ denotes the complete graph on $n$ vertices.   $P_n$ and $C_n$ respectively denotes the path and cycle with $n$ edges. We denote  the degree of
a vertex $v$ in $G$ by $deg_G(v)$.
 A graph is said to be \emph{connected} if any two vertices of it can be joined by a path. The	\emph{diameter} of a connected graph is the maximum of the length of the shortest path between any pair of vertices.
 A \emph{tree} is a connected graph with out cycles.  $G$ is said to be  $H$-\textit{free} if $G$ has no subgraph isomorphic to $H$. The girth of $G$, denoted by $girth(G)$, is the length of its shortest cycle, if it exist; other wise $girth(G) = \infty$. An \emph{independent set} of $G$ is a subset of $V(G)$ having no
two vertices are adjacent. The \emph{independence number} of $G$, denoted by $\alpha(G)$, is the
cardinality of the largest independent set. A \emph{clique} of  $G$ is a complete subgraph of $G$. The clique number $\omega(G)$ of $G$ is the cardinality of a largest clique in $G$.
%The \emph{clique cover number} of $G$, denoted by $\theta(G)$, is the minimum number of cliques in $G$ which cover all the vertices of $G$. $G$ said to be \emph{weakly $\alpha$-perfect} if $\alpha(G)=\theta(G)$.
The \emph{chromatic number} $\chi(G)$ of $G$ is the smallest number of colours needed to colour the vertices
of $G$ such that no two adjacent vertices gets the same colour. G is said to be \emph{weakly
$\chi$-perfect} if $\omega(G) = \chi(G)$.  A graph is said to be\textit{ planar}, if it can be drawn in the plane, so that no two 
lines intersect except at the vertices; otherwise it is said to be \textit{nonplanar}.  A graph is called \textit{unicyclic}, if it contains 
exactly one cycle.

For any integer $n >1$, $\pi(n)$ denotes the set of all prime divisors of $n$. If $G$ is a finite group, then $\pi(|G|)$ is denoted by $\pi(G)$. The number of Sylow p-subgroups of a group G is denoted by $n_p(G)$. We denote the order of an element $a\in\mathbb Z_n$ by $\mbox{ord}_n(a)$. Moreover, through out this paper, $p$, $q$, $r$, $s$ denotes the distinct primes.

Since the only groups having no proper subgroups are the trivial group, and the
groups of prime order, it follows that, we can define $\mathcal P(G)$ only when the group $G$ is
neither the trivial group nor the group of prime order. So, unless
otherwise mentioned, throughout this paper we consider only groups other than the
trivial group, and the groups of prime order.

We use only elementary methods. In Section 2,  we classify all the finite groups whose coprime graph of subgroups are one of  totally disconnected, bipartite, connected, complete, complete bipartite, tree,  star or path, and show that  the coprime graph of subgroups of a finite group can not be a cycle.  For a finite group $G$, we obtain the independence number, clique number, chromatic number, diameter, girth of $\mathcal P(G)$, and show that  $\mathcal P(G)$  is weakly $\chi$-perfect. Moreover, we obtain the degrees of vertices of $\mathcal P(\mathbb Z_n)$, and we show that every simple graph is an induced subgraph of $\mathcal P(\mathbb Z_n)$, for some $n$.
 
In Section 3, we classify all the finite groups whose coprime graph of subgroups of groups are one of
planar,  $K_{2,3}$-free, $K_{1,4}$-free, $K_{1,3}$-free, $K_{1,2}$-free, unicyclic.

\section{Some results on coprime graph of subgroups of groups}

\begin{thm}\label{order prime graph of subgroups 14}
Let $G_1$ and $G_2$ be two groups. If $G_1\cong G_2$, then $\mathcal P(G_1)\cong \mathcal P(G_2)$.
\end{thm}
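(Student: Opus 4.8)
The plan is to show that a group isomorphism $\phi \colon G_1 \to G_2$ induces a graph isomorphism $\mathcal P(G_1) \to \mathcal P(G_2)$ by tracking what $\phi$ does to subgroups. First I would recall that an isomorphism $\phi$ sets up a bijection between the subgroup lattices: the map $H \mapsto \phi(H)$ sends each subgroup of $G_1$ to a subgroup of $G_2$, it is injective because $\phi$ is injective, and it is surjective because $\phi^{-1}$ is also an isomorphism, so every subgroup $K$ of $G_2$ equals $\phi(\phi^{-1}(K))$. Moreover $H$ is a \emph{proper} subgroup of $G_1$ if and only if $\phi(H)$ is a proper subgroup of $G_2$ (since $\phi(H) = G_2$ would force $H = \phi^{-1}(G_2) = G_1$). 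Hence the restriction of $H \mapsto \phi(H)$ to proper subgroups is a bijection $V(\mathcal P(G_1)) \to V(\mathcal P(G_2))$; call this map $\Phi$.

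Next I would verify that $\Phi$ preserves and reflects adjacency. The key observation is that $|\phi(H)| = |H|$ for every subgroup $H$, because $\phi$ restricted to $H$ is a bijection onto $\phi(H)$. Therefore, for two distinct proper subgroups $H, K$ of $G_1$, we have $\gcd(|H|,|K|) = 1$ if and only if $\gcd(|\phi(H)|,|\phi(K)|) = 1$; and $H \neq K$ forces $\Phi(H) = \phi(H) \neq \phi(K) = \Phi(K)$ since $\Phi$ is injective. By the definition of $\mathcal P$, this says exactly that $H$ and $K$ are adjacent in $\mathcal P(G_1)$ if and only if $\Phi(H)$ and $\Phi(K)$ are adjacent in $\mathcal P(G_2)$. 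So $\Phi$ is a bijection on vertices preserving adjacency in both directions, i.e. a graph isomorphism, giving $\mathcal P(G_1) \cong \mathcal P(G_2)$.

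There is essentially no hard part here; the statement is a routine functoriality check. The only point deserving a moment's care is the claim that $H \mapsto \phi(H)$ is a bijection on the full set of subgroups and carries proper subgroups to proper subgroups — this is where one genuinely uses that $\phi$ is an isomorphism (not merely a homomorphism), via the existence of the inverse $\phi^{-1}$. Everything else (order preservation, the coprimeness condition being a function of orders only) follows immediately. I would therefore write the proof as: define $\Phi$, check it is a well-defined bijection on proper subgroups, note $|\Phi(H)| = |H|$, and conclude adjacency is preserved both ways.
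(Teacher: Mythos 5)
Your proof is correct and follows essentially the same approach as the paper: both define the vertex map $H \mapsto f(H)$ induced by the group isomorphism and use the fact that isomorphisms preserve subgroup orders to conclude adjacency is preserved. Your version simply spells out the routine details (bijectivity on proper subgroups, order preservation) that the paper leaves implicit.
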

\begin{proof}
Let $f:G_1\rightarrow G_2$ be a group isomorphism. Define a map $\psi:V(\mathcal P(G_1))\rightarrow V(\mathcal P(G_2))$ by $\psi(H)=f(H)$, for every $H\in V(\Gamma(G_1))$.
Since a group isomorphism preserves the order of subgroups, so it follows that $\psi$ is  a graph isomorphism.
\end{proof}

\noindent\textbf{Remark:} The converse of the above Theorem~\ref{order prime graph of subgroups 14}, is not true, for if $G_1\cong \mathbb Z_{p^5}$ and $G_2\cong Q_8$, then the number of  proper subgroups of $G_1$ is four and their orders are $p$, $p^2$, $p^3$, $p^4$; the number of  proper subgroups of $G_2$ is four and their orders are 4, 4, 4, 2. Here $\mathcal P(G_1)\cong \overline{K}_4\cong \mathcal P(G_2)$, but $G_1\ncong G_2$.

%\begin{thm}\label{order prime graph of subgroups of groups 1}
%Let $G$ be a group. Then $\mathcal P(G)$ is totally disconnected if and only if $G$ is a $p$-group.
%\end{thm}
%\begin{proof}
%If $G$ is a $p$-group, then clearly $\mathcal P(G)$ is totally disconnected.
%Suppose order of $G$ has more than one prime factors, then $G$ has at least two subgroups of different prime orders and so they are adjacent in $\mathcal P(G)$.
%So $\mathcal P(G)$ is not totally disconnected.
%\end{proof}

\begin{thm}\label{order prime graph of subgroups of groups 4}
Let $G$ be a group of order $p_1^{\alpha_1}p_2^{\alpha_2}\ldots p_k^{\alpha_k}$, where $p_i$'s are distinct primes,  $\alpha_i\geq 1$.
Then
\begin{enumerate}[\normalfont (1)]
\item $\mathcal P(G)$ is $k$-partite;
\item $\alpha(\mathcal P(G))= \displaystyle \max_{\substack{i}}~ |\mathcal B_i|$, where for each $i=1$, 2, $\ldots$, $k$, $\mathcal B_i$ is the set of all proper subgroups of $G$ whose order is divisible by $p_i$;
\item $\omega(\mathcal P(G))=k=\chi(\mathcal P(G))$;
\end{enumerate}
In particular, $\mathcal P(G)$ is weakly $\chi$-perfect.
\end{thm}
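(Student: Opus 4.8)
The plan is to use the prime factorization of $|G|$ directly. Let $P_i$ denote the set of proper subgroups $H$ of $G$ with $p_i \mid |H|$; a subgroup whose order is divisible by more than one prime lies in several of these sets, so first I would refine these into a genuine partition of $V(\mathcal P(G))$. The cleanest way: assign to each proper subgroup $H$ the smallest index $i$ such that $p_i \mid |H|$, giving classes $C_1, C_2, \ldots, C_k$ (some possibly empty). Two subgroups in the same class $C_i$ both have order divisible by $p_i$, hence their orders are not coprime, so they are non-adjacent; this proves (1).

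**Independence number.** For (2), an independent set is a set of pairwise non-adjacent subgroups, i.e.\ a set in which every two members share a common prime divisor. I would argue first that each $\mathcal B_i$ (all proper subgroups whose order is divisible by $p_i$) is independent, since any two such have a common factor $p_i$; hence $\alpha(\mathcal P(G)) \ge \max_i |\mathcal B_i|$. For the reverse inequality, I would take a maximum independent set $S$ and show it must be contained in some $\mathcal B_i$. The natural approach is a Helly-type / sunflower argument on primes: if no single prime divides the orders of all members of $S$, I want to extract two members with coprime orders. The subtle point is that pairwise-intersecting families of prime sets need not have a common element in general, so the argument must exploit that the relevant prime sets are all subsets of the fixed $k$-element set $\pi(G)$ together with the structure of subgroup orders — concretely, if $|S| \ge 2$ pick $H \in S$, let $p$ be a prime dividing $|H|$; every other member of $S$ shares some prime with $H$, and one then has to rule out the possibility that these shared primes differ across members while still keeping $S$ independent. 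I expect this to be the main obstacle: I would handle it by observing that for subgroups, coprimality of orders is exactly $\gcd(|H|,|K|)=1$, and carefully case-check small $|S|$ or invoke that any independent set of size $\ge 2$ all of whose members have order divisible by a fixed $p$ lies in $\mathcal B_p$ — so the equality $\alpha(\mathcal P(G)) = \max_i|\mathcal B_i|$ follows once one shows a maximum independent set cannot "straddle" two primes without producing a coprime pair.

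**Clique number and chromatic number.** For (3), a clique is a set of pairwise-adjacent subgroups, i.e.\ pairwise coprime orders; since all orders are products of the $p_i$, a pairwise-coprime family has size at most $k$, so $\omega(\mathcal P(G)) \le k$. To see $\omega(\mathcal P(G)) = k$, exhibit $k$ pairwise-coprime proper subgroups: take a Sylow $p_i$-subgroup $P_i$ for each $i$; by Sylow's theorem these exist, and since $k \ge 2$ (as $G$ is not of prime power order — wait, $G$ may be a $p$-group, in which case $k=1$), I would split into cases. If $k=1$, $\mathcal P(G)$ is totally disconnected, $\omega = 1 = \chi$, and each $P_i = G$ is not proper — so instead when $k=1$ every proper subgroup works trivially and $\omega=\chi=1$ holds since there are no edges. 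If $k \ge 2$, each Sylow $p_i$-subgroup $P_i$ is proper (its order $p_i^{\alpha_i} < |G|$), and $\{P_1,\ldots,P_k\}$ is a clique of size $k$. Finally, the $k$-partition from (1) is a proper $k$-colouring, so $\chi(\mathcal P(G)) \le k$; combined with $\chi \ge \omega = k$ we get $\chi(\mathcal P(G)) = k = \omega(\mathcal P(G))$, which is exactly the statement that $\mathcal P(G)$ is weakly $\chi$-perfect. I would write the $k=1$ degenerate case as a one-line remark and concentrate the exposition on $k \ge 2$.
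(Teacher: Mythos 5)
Parts (1) and (3) of your proposal are correct and essentially identical to the paper's argument: the paper also partitions the vertex set by the smallest index $i$ with $p_i \mid |H|$, and also exhibits a $k$-clique consisting of one subgroup for each prime $p_i$ (you take Sylow subgroups, the paper takes subgroups of order $p_i$; either works), closing the loop via $k=\omega(\mathcal P(G))\le\chi(\mathcal P(G))\le k$. Your handling of the degenerate case $k=1$ is a harmless addition.

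The genuine gap is in part (2), and it sits exactly where you suspected -- but the repair you sketch cannot work. You propose to prove $\alpha(\mathcal P(G))\le\max_i|\mathcal B_i|$ by showing that a maximum independent set must be contained in some single $\mathcal B_i$, i.e.\ that it ``cannot straddle two primes without producing a coprime pair.'' That claim is false: in $\mathcal P(\mathbb Z_{pqr})$ the three subgroups of orders $pq$, $pr$, $qr$ have pairwise non-coprime orders, so they form an independent set of size $3=\max_i|\mathcal B_i|=\alpha(\mathcal P(\mathbb Z_{pqr}))$, yet no single prime divides all three orders. Independence in $\mathcal P(G)$ corresponds to a pairwise-intersecting family of subsets of $\pi(G)$, and such families need not be ``stars''; what has to be proved is only the \emph{cardinality} bound, namely that every family of proper subgroups with pairwise non-coprime orders has size at most $\max_i|\mathcal B_i|$. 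That requires an extremal, EKR-type counting argument which your case-checking sketch does not supply -- and which, it should be said, the paper's own proof also elides, since it merely asserts that each $\mathcal B_i$ is a \emph{maximal} independent set and concludes $\alpha(\mathcal P(G))=\max_i|\mathcal B_i|$, conflating maximal with maximum. So you have correctly located the one nontrivial step of the theorem, but the lemma you intend to prove in its place is false, and the step remains open in your write-up.
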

\begin{proof}
% and $\alpha_1\geq \alpha_2\geq \cdots \geq \alpha_k$.
Let $\mathcal A_1$ be the set of all proper subgroups of $G$ whose order is divisible by $p_1$. For each $i \in \{2$, 3, $\ldots$, $k\}$, let $\displaystyle \mathcal A_i =\{H~|~H~\mbox{is a proper subgroup of G such that}~ p_i~\mbox{divides}~|H|\}-\bigcup_{j=1}^{i-1} \mathcal A_j$. Then clearly the collection $\{ \mathcal A_i\}_{i=1}^k$ forms a partition of the vertex set of $\mathcal P(G)$. Also no two vertices in a same partition are adjacent in $\mathcal P(G)$. Moreover, $k$ is the minimal number such that a $k$-partition of the vertex set of $\mathcal P(G)$ is having this property, since $\pi(G)=k$.  It follows that $\mathcal P(G)$ is $k$-partite.

Now for each $i=1$, 2, $\ldots$, $k$, let $\mathcal B_i$ be the set of all proper subgroups of $G$ whose order is divisible by $p_i$. Clearly each $\mathcal B_i$ is a maximal independent set of $\mathcal P(G)$. Thus  $\alpha(\mathcal P(G))= \displaystyle \max_{\substack{i}}~ |\mathcal B_i|$.
For each $i=1, 2, \ldots, k$, $G$ has a subgroup of order $p_i$. Then the set having one subgroup from each of these orders forms a clique in $\mathcal P(G)$. Since $\mathcal P(G)$ is $k$-partite, it follows that $\omega(\mathcal P(G))=k$.
Obviously, $\chi(\mathcal P(G))=k$. Weakly $\chi$-perfectness of $\mathcal P(G)$ follows from the definition.
\end{proof}

The next result is an immediate consequence of Theorem~\ref{order prime graph of subgroups of groups 4} (1).

\begin{cor}\label{c1}
Let $G$ be a group with $\pi(G)=k$.
Then
\begin{enumerate}[\normalfont (1)]
\item $\mathcal P(G)$ is totally disconnected if and only if $k=1$;
\item $\mathcal P(G)$ is bipartite if and only if $k =1, 2$.
\end{enumerate}
\end{cor}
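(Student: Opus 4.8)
The plan is to read both parts straight off Theorem~\ref{order prime graph of subgroups of groups 4}, since a graph that is $k$-partite for $k=1$ is by definition edgeless, and a graph that is $k$-partite for $k\in\{1,2\}$ is bipartite. So for the ``if'' directions I would argue as follows. If $k=1$, then part~(1) of Theorem~\ref{order prime graph of subgroups of groups 4} says $\mathcal P(G)$ is $1$-partite, i.e. its edge set is empty, hence totally disconnected. If $k\in\{1,2\}$, the same part says $\mathcal P(G)$ is at most $2$-partite, hence bipartite (the case $k=1$ gives a totally disconnected graph, which is vacuously bipartite, so the two ranges in~(1) and~(2) are consistent).

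For the converse of~(1), I would suppose $k\ge 2$ and exhibit an edge. By Cauchy's theorem $G$ has a subgroup $H$ with $|H|=p_1$ and a subgroup $K$ with $|K|=p_2$; since $|G|$ has at least two prime divisors, both $H$ and $K$ are proper, so $H,K\in V(\mathcal P(G))$, and as $\gcd(p_1,p_2)=1$ they are adjacent. Thus $\mathcal P(G)$ is not totally disconnected. For the converse of~(2), I would suppose $k\ge 3$ and invoke part~(3) of Theorem~\ref{order prime graph of subgroups of groups 4}: $\omega(\mathcal P(G))=k\ge 3$, so $\mathcal P(G)$ contains a $K_3$, which is an odd cycle, and a graph containing an odd cycle is not bipartite.

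Since the argument is a direct appeal to the partition and clique structure already established, there is no genuine obstacle here; the only points that need a moment of care are checking that the subgroups produced by Cauchy's theorem are genuinely proper (automatic once $|G|$ has more than one prime factor) and remembering that the degenerate totally disconnected case is counted as bipartite, so that the ranges ``$k=1$'' and ``$k=1,2$'' in the two parts fit together correctly.
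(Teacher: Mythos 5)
Your proof is correct and follows the same route as the paper, which simply declares the corollary ``an immediate consequence of Theorem~\ref{order prime graph of subgroups of groups 4}~(1)'' without spelling out the converses. Your explicit arguments for the ``only if'' directions (Cauchy's theorem producing an edge when $k\ge 2$, and $\omega(\mathcal P(G))=k\ge 3$ producing an odd cycle) are a welcome elaboration, since $k$-partiteness alone does not rule out the graph also being $1$- or $2$-partite; the paper implicitly relies on the minimality of $k$ (equivalently $\chi(\mathcal P(G))=k$) established inside the proof of that theorem.
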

%\begin{proof}
%Proof follows by above Theorem~\ref{order prime graph of subgroups of groups 4}.
%\end{proof}

%\begin{cor}\label{order prime graph of subgroups of groups 5}
%Let $G$ be a finite group. Then $\mathcal P(G)$ is bipartite if and only if $|G|=p^\alpha$ or $p^\alpha q^\beta$.
%\end{cor}

\begin{thm}\label{order prime graph of subgroups of groups 6}
Let $G$ be a finite group. Then
\begin{enumerate}[\normalfont (1)]
\item $\mathcal P(G)$ is complete bipartite if and only if $G$ is isomorphic to one of $\mathbb Z_{pq}$,
$\mathbb Z_q\rtimes \mathbb Z_p$, $(\mathbb Z_p\times \mathbb Z_p)\rtimes \mathbb Z_q$ or $A_4$;
\item The following are equivalent:
\begin{enumerate}[\normalfont (a)]
\item $G \cong \mathbb Z_{pq}$ or $\mathbb Z_q\rtimes \mathbb Z_p$;
\item $\mathcal P(G)$ is a tree;
\item $\mathcal P(G)$ is a star.
\end{enumerate}

\item The following are equivalent:
\begin{enumerate}[\normalfont (a)]
\item $G \cong \mathbb Z_{pq}$;
\item $\mathcal P(G)$ is complete;
\item $\mathcal P(G)$ is a path.
\end{enumerate}
\end{enumerate}
\end{thm}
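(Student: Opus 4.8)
The three parts are tied together by one structural fact, so I would establish that first. Call a proper nontrivial subgroup $H\le G$ \emph{mixed} if $|H|$ has at least two prime divisors. If $\pi(G)=\{p,q\}$ then a mixed $H$ is an isolated vertex of $\mathcal P(G)$: every proper nontrivial subgroup has order a power of $p$ times a power of $q$, and any such order is divisible by $p$ or by $q$, hence not coprime to $|H|$. Conversely, if $\pi(G)=\{p,q\}$ and $G$ has \emph{no} mixed proper nontrivial subgroup, then every vertex of $\mathcal P(G)$ is a $p$-group or a $q$-group, vertices in the same class are nonadjacent (shared prime) and vertices in different classes are adjacent (coprime orders), so $\mathcal P(G)$ is complete bipartite. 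Using Corollary~\ref{c1} to discard $\pi(G)=1$ (totally disconnected graph) and $\pi(G)\ge3$ (not bipartite), part~(1) thus reduces to the purely group-theoretic problem: \emph{classify the finite groups $G$ with $\pi(G)=\{p,q\}$ having no mixed proper nontrivial subgroup.}

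For the ``if'' direction I would simply list the proper nontrivial subgroups of each candidate and apply the criterion above. $\mathbb Z_{pq}$ has only the subgroups of orders $p$ and $q$; the nonabelian group $\mathbb Z_q\rtimes\mathbb Z_p$ has the normal subgroup of order $q$ and its $q$ Sylow $p$-subgroups; $A_4$ has the Klein four-group, the three subgroups of order $2$, and the four subgroups of order $3$. For $(\mathbb Z_p\times\mathbb Z_p)\rtimes\mathbb Z_q$, taken with $\mathbb Z_q$ acting on $\mathbb Z_p\times\mathbb Z_p$ without a nontrivial invariant subgroup (which forces $q\mid p+1$, the case $p=2,q=3$ being $A_4$), the $p$-subgroups are the normal Sylow $p$-subgroup together with its $p+1$ lines, the $q$-subgroups are the Sylow $q$-subgroups, and there is no subgroup of order $pq$: such a subgroup would meet the Sylow $p$-subgroup in a subgroup of order $p$ normal in it, hence $\mathbb Z_q$-invariant, contradicting the choice of action. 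In every case there is no mixed proper subgroup, so $\mathcal P(G)$ is complete bipartite (giving $K_2$, $K_{1,q}$, $K_{4,4}$ and $K_{p+2,p^2}$ respectively).

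The ``only if'' direction is the crux, and I expect the exponent bookkeeping to be the main obstacle. Write $|G|=p^aq^b$; by hypothesis $G$ has no mixed proper nontrivial subgroup. Since $p^aq^b$-groups are solvable, $G/G'$ is a nontrivial abelian $\{p,q\}$-group, so $G$ has a normal subgroup of prime index $p$ or $q$. If $a\ge2$ and that index is $p$, the subgroup has order $p^{a-1}q^b$ and is mixed and proper --- impossible --- so $a\ge2$ forces $p\nmid|G/G'|$, i.e.\ $G/G'$ is a $q$-group; symmetrically $b\ge2$ forces $G/G'$ to be a $p$-group; both cannot hold (else $G=G'$, contradicting solvability), so after relabelling $b=1$. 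If $a=1$ we are in order $pq$ and $G\cong\mathbb Z_{pq}$ or $\mathbb Z_q\rtimes\mathbb Z_p$. If $a\ge2$, then $|G/G'|=q$, the Sylow $p$-subgroup $P=G'$ is normal, and $P$ can have no proper nontrivial characteristic subgroup $M$ (else $MQ$, $Q$ a Sylow $q$-subgroup, is mixed and proper), so taking $M=Z(P)$ shows $P$ is abelian and $M=\Phi(P)$ then shows $P$ is elementary abelian; moreover $\mathbb Z_q\cong G/P$ acts on $P$ with no proper nontrivial invariant subspace (again to avoid a mixed $MQ$) and faithfully (else $G=P\times Q$ and any line of $P$ gives a mixed proper $M\times Q$). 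This pins $G$ down to $(\mathbb Z_p)^a\rtimes\mathbb Z_q$ with an irreducible faithful action, which forces $a=\mathrm{ord}_q(p)$; here I would be careful to verify that this family --- read through the intended meaning of ``$(\mathbb Z_p\times\mathbb Z_p)\rtimes\mathbb Z_q$'' together with $A_4$ --- is precisely what the statement lists.

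For (2) and (3) I would feed part~(1) back in, after discarding the degenerate single-vertex case $G\cong\mathbb Z_{p^2}$. A tree, a path, or a complete $\mathcal P(G)$ (with at least one edge) has no isolated vertex, hence $G$ has no mixed proper subgroup, hence $\mathcal P(G)$ is complete bipartite and $G$ is one of the groups of~(1). Among those, $\mathcal P(G)$ is a star exactly for $\mathbb Z_{pq}$ ($\mathcal P=K_2$) and $\mathbb Z_q\rtimes\mathbb Z_p$ ($\mathcal P=K_{1,q}$), while $(\mathbb Z_p\times\mathbb Z_p)\rtimes\mathbb Z_q$ and $A_4$ give graphs with a $4$-cycle; since a complete bipartite graph is a tree iff it is a star, this yields (2), and since among these only $\mathbb Z_{pq}$ gives $K_2$ --- the only path, because a star $K_{1,n}$ is a path only for $n\le2$ while the stars arising here are $K_{1,q}$ with $q\ge3$ --- this yields (3). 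Alternatively, ``$\mathcal P(G)$ complete $\Rightarrow G\cong\mathbb Z_{pq}$'' follows directly from Theorem~\ref{order prime graph of subgroups of groups 4}: $\omega(\mathcal P(G))=\pi(G)$, so a complete $\mathcal P(G)$ has exactly $\pi(G)$ vertices, which forces every Sylow subgroup to have prime order and to be the unique subgroup of that order, whence $|G|$ is squarefree, $G$ is nilpotent, and (as $\mathbb Z_{p_1\cdots p_k}$ has extra subgroups when $k\ge3$) $G\cong\mathbb Z_{pq}$.
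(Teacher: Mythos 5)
Your argument is genuinely different from the paper's and, in the ``only if'' direction of part (1), considerably more structural. The paper proceeds by brute force: it works through Burnside's list of groups of order $p^2q$ one at a time, and disposes of the orders $p^\alpha q$ ($\alpha\ge 3$) and $p^\alpha q^\beta$ ($\alpha,\beta\ge 2$) by exhibiting a proper subgroup of order divisible by $pq$. Your route --- reduce via the isolated-vertex criterion to ``$\pi(G)=\{p,q\}$ and no proper subgroup of mixed order'', then use solvability, the abelianization, and the absence of proper nontrivial characteristic subgroups of $P=G'$ to force $G\cong(\mathbb Z_p)^a\rtimes\mathbb Z_q$ with a faithful irreducible action --- avoids the classification tables entirely, and every step of it checks out. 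Your treatment of parts (2) and (3), including the clique-number argument for ``complete $\Rightarrow G\cong\mathbb Z_{pq}$'' (which is essentially the paper's own argument via Theorem~\ref{order prime graph of subgroups of groups 4}), also goes through.

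The hedge in your last sentence about part (1) is not a formality, and you should not try to discharge it: the family you derive is strictly larger than the theorem's list, and the theorem as printed is false. A faithful irreducible action of $\mathbb Z_q$ on $(\mathbb Z_p)^a$ exists exactly for $a=\mathrm{ord}_q(p)$, which can exceed $2$. Take $p=2$, $q=7$, so $a=3$: the Frobenius group $G=(\mathbb Z_2)^3\rtimes\mathbb Z_7$ of order $56$ has as proper nontrivial subgroups only the $15$ nontrivial subgroups of its normal Sylow $2$-subgroup and its $8$ Sylow $7$-subgroups --- your own intersection argument shows a subgroup of order $14$ or $28$ would yield a proper nonzero invariant subspace --- so $\mathcal P(G)\cong K_{15,8}$ is complete bipartite, yet $G$ is none of $\mathbb Z_{pq}$, $\mathbb Z_q\rtimes\mathbb Z_p$, $(\mathbb Z_p\times\mathbb Z_p)\rtimes\mathbb Z_q$, $A_4$. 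The paper misses these groups because its argument for $|G|=p^\alpha q$, $\alpha\ge 3$, asserts that $\langle a,b\rangle$ (with $a$, $b$ of orders $p$, $q$) is always a \emph{proper} subgroup; in these Frobenius groups $\langle a,b\rangle=G$. So the correct conclusion of your analysis is a corrected statement of part (1): $G$ is $\mathbb Z_{pq}$, $\mathbb Z_q\rtimes\mathbb Z_p$, or $(\mathbb Z_p)^{\mathrm{ord}_q(p)}\rtimes\mathbb Z_q$ with faithful irreducible action. Parts (2) and (3) are unaffected, since the additional groups give $K_{m,n}$ with $m,n\ge 2$, which is neither a tree, nor a star, nor a path, nor complete.
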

\begin{proof}
In view of part (2) of Corollary~\ref{c1}, to prove parts (1), (2), and $(a) \Leftrightarrow (c)$ of (3) of this theorem, it is enough to consider the groups of order $p^\alpha$ and $p^\alpha q^\beta$.

If $|G|=p^\alpha$, then by Corollary~\ref{c1}(1), $\mathcal P(G)$ is totally disconnected and so it is neither complete bipartite nor a tree.

Let $|G|=pq$ with $p < q$. Then $G\cong \mathbb Z_{pq}$ or $\mathbb Z_q\rtimes \mathbb Z_p$. If $G\cong \mathbb Z_{pq}$, then
$\mathcal P(G)\cong K_2$
and so $\mathcal P(G)$ is a path. If $G\cong \mathbb Z_q\rtimes \mathbb Z_p$, then $G$ has an unique subgroup of order $q$, and $q$ subgroups of order $p$; also these are the only proper subgroups of $G$. It follows that
$\mathcal P(G)\cong K_{1,q}$
and so $\mathcal P(G)$ is complete bipartite, star; but not a path.

Let $|G|=p^2q$. Suppose $G$ is abelian, then $G$ has a subgroup of order $pq$ and so $\mathcal P(G)$ is disconnected. It follows that $\mathcal P(G)$ is neither  complete bipartite nor a tree. Now assume that $G$ is non-abelian. Here we use the classification of groups of order $p^2q$ given in \cite[p.~76-80]{burn}.

\noindent\textbf{Case 1:} $p<q$:

\noindent\textbf{Case 1a:} $p \nmid (q-1)$. By Sylow's Theorem, it is easy to see that there is no non-abelian group in this case.

\noindent\textbf{Case 1b:} $p \mid (q-1)$ but $p^2 \nmid (q-1)$. In this case, there are two non-abelian groups.

The first group is $G_1:= \mathbb Z_q \rtimes \mathbb Z_{p^2} = \langle a, b | a^q= b^{p^2}=1, bab^{-1}=a^i, ord_q(i)=p \rangle$. Here $G_1$ has an unique subgroup of order $q$; unique subgroup of order $pq$; $q$ subgroups of order $p^2$ and unique subgroups of order $p$; also these are the only proper subgroups of $G_1$. Therefore, $\mathcal P(G_1)\cong K_{1,q+1}\cup K_1$. So $\mathcal P(G_1)$ is disconnected. Hence $\mathcal P(G_1)$ is neither complete bipartite nor a tree.

The second group is $G_2:= \langle a, b, c| a^q= b^p= c^p=1, bab^{-1}=a^i, ac=ca, bc=cb, ord_q(i)=p \rangle$. Here $G_2$ has a subgroup $\langle a, c\rangle$ of order $pq$ and so $\mathcal P(G_2)$ is disconnected. Hence $\mathcal P(G_2)$ is neither complete bipartite nor a tree.

\noindent\textbf{Case 1c:} $p^2 | (q-1)$. In this case, we have both groups $G_1$ and $G_2$ from Case 1b together with the group $G_3:= \mathbb Z_q
\rtimes_2 \mathbb Z_{p^2} = \langle a, b | a^q= b^{p^2}=1, bab^{-1}=a^i, ord_q(i)=p^2 \rangle$. Here $G_3$ has unique subgroup of order $q$; unique subgroup of order $pq$; $q$ subgroups of order $p^2$, and $q$ subgroups of order $p$. Also these are the only proper subgroups of $G_3$. Therefore, $\mathcal P(G_3)\cong (K_1+K_{2q})\cup K_1$ and so $\mathcal P(G_3)$ is disconnected. Hence $\mathcal P(G_3)$ is neither complete bipartite nor a tree.

\noindent\textbf{Case 2:} $p > q$

\noindent\textbf{Case 2a:} $q \nmid (p^2 -1)$. Then there is no  non-abelian subgroups.

\noindent\textbf{Case 2b:} $q | (p-1)$. In this case, we have two groups.

The first one is $G_4:=\langle a,b | a^{p^2}= b^q=1, bab^{-1}, ord_{p^2}(i)=q \rangle$. Here $G_4$ has a unique subgroup of order $p^2$; unique subgroup of order $p$; $p$ subgroups of order $pq$; $p^2$ subgroups of order $q$. Also these are the only proper subgroups of $G_4$. Therefore, $\mathcal P(G_4)\cong (K_2+K_{p^2})\cup \overline{K}_{p}$ and so $\mathcal P(G_4)$ is disconnected. Hence $\mathcal P(G_4)$ is neither complete bipartite nor a tree.

Next we have the family of groups $\langle a, b, c | a^p=b^p=c^q=1, cac^{-1}=a^i, cbc^{-1}=b^{i^t}, ab=ba, ord_p(i)=q \rangle$. There are $(q+3)/2$
 isomorphism types in this family (one for $t=0$ and one for each pair $\{ x, x^{-1} \}$ in ${F_p}^{\times}$. We will refer to all of these groups as $G_{5(t)}$
of order $p^2q$. Here $G_{5(t)}$ has a
subgroup $\langle a, c\rangle$ of order $pq$ and so $\mathcal P(G_{5(t))}$ is disconnected. Hence $\mathcal P(G_{5(t)})$ is neither complete bipartite nor a tree.

\noindent\textbf{Case 2c:} $q| (p+1)$. In this case, we have only one subgroup of order $p^2q$, given by $G_6:= (\mathbb Z_p \times \mathbb Z_p) \rtimes \mathbb Z_q
= \langle a, b,c| a^p=b^p=c^q=1, ab=ba, cac^{-1}=a^ib^j, cbc^{-1}=a^kb^l\rangle$, where $\bigl(\begin{smallmatrix}
  i & j\\ k & l
\end{smallmatrix} \bigr)$ has order $q$ in $GL_2(p)$. Here $G_6$ does not have a
subgroup of order $pq$. But $G_6$ has an unique subgroup of order $p^2$; $p+1$ subgroups of order $p$; $p^2$ subgroups of order $q$, also these are the only proper subgroups of $G_6$. Hence $\mathcal P(G_6)\cong K_{p+2}+K_{p^2}$, which is complete bipartite; but which is not a tree.
% is complete bipartite with one partition contains subgroups of order $p$, $p^2$ and another partition contains subgroups of order $q$ and so $\mathcal P(G_6)\cong K_{p+2,p^2}$, which is not a tree.

Note that if $(p, q)= (2, 3)$, the Cases 1 and 2 are not mutually exclusive. Up to isomorphism, there are three non-abelian groups of order 12:
$\mathbb Z_3 \rtimes \mathbb Z_4$, $D_{12}$ and $A_4$. In Case 1b, we already dealt with the group $\mathbb Z_3\rtimes \mathbb Z_4$. It is easy to see that $\mathcal P(D_{12})\cong K_{1,10}\cup \overline{K}_3$ and $\mathcal P(A_4)\cong K_{4,4}$. These three graphs are not trees and the only complete bipartite graph is $\mathcal P(A_4)$.

%Here $\mathbb Z_3 \rtimes \mathbb Z_4$, $D_{12}$ contains a subgroup of order 6 and so
%$\mathcal P(G)$ is disconnected. If $G\cong A_4$, then $A_4$ has a unique Sylow 2-subgroup $H:= \mathbb Z_2 \times \mathbb Z_2$ and
%it has four Sylow 3-subgroup, say $H_1$, $H_2$, $H_3$, $H_4$; $H$ has three subgroups of order 2, say $H_5$, $H_6$, $H_7$.
%These are the only proper subgroups of $A_4$. It follows that
%$\mathcal P(G)\cong K_{4,4}$,
%which is not a tree.

If $|G|=p^\alpha q$, $\alpha\geq 3$, then
%If number of Sylow $p$-subgroup of $G$ is not unique, then $G$ has a subgroup of order $p^{\alpha-1}q$ and so
%$\mathcal P(G)$ is disconnected. If Sylow $p$-subgroup of $G$ is unique, then $G\cong P\rtimes \mathbb Z_q$.
let $a$, $b$ be elements in $G$ of order $p$, $q$ respectively. Here $\langle a, b\rangle$ is a proper subgroup of $G$ whose order is divisible by $p$ and $q$. Therefore,
$\mathcal P(G)$ is disconnected. Hence $\mathcal P(G)$ is neither complete bipartite nor a tree.

If $|G|=p^\alpha q^\beta$, $\alpha$, $\beta\geq 2$, then $G$ has a subgroup with prime index, since $G$ is solvable and so $\mathcal P(G)$ is disconnected. Hence $\mathcal P(G)$ is neither complete bipartite nor a tree.

Combining all the above cases together, the proof of parts (1), (2), and $(a) \Leftrightarrow (c)$ of (3) of this theorem follows.

Now, we prove $(a) \Leftrightarrow (b)$ of part (3): Clearly $(a) \Rightarrow (b)$. So assume that $\mathcal P(G)$ is complete. Then by Theorem \ref{order prime graph of subgroups of groups 4}, each partition $\mathcal A_i$, $i= 1, 2, \ldots , k$ of $\mathcal P(G)$ must contain exactly one subgroup of distinct prime order and so these subgroups are normal in $G$. If $k >3$, then $G$ must contain a subgroup whose order is a product of $k$ distinct primes, so this subgroup is an isolated vertex in  $\mathcal P(G)$, which is not possible. Hence $ k = 2$ and so by part (1) of this theorem, it turns out that $G \cong \mathbb Z_{pq}$. This completes the proof.
\end{proof}

\begin{thm}\label{order prime graph of subgroups of groups 8}
If $G$ is a finite group, then $\mathcal P(G)\ncong C_n$, for $n\geq 3$.
\end{thm}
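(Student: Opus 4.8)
The plan is to argue by contradiction. Suppose $\mathcal P(G)\cong C_n$ for some $n\ge 3$. Since $C_n$ is connected and $2$-regular, $\mathcal P(G)$ is connected, has no isolated vertex, and has at least three vertices. The whole argument is driven by the equality $\omega(\mathcal P(G))=\pi(G)$ from Theorem~\ref{order prime graph of subgroups of groups 4}(3), together with the fact that $\omega(C_3)=3$ while $\omega(C_n)=2$ for all $n\ge 4$. This splits the proof into two cases. In the case $n=3$, the graph $\mathcal P(G)\cong K_3$ is complete, so by Theorem~\ref{order prime graph of subgroups of groups 6}(3) $\mathcal P(G)$ must be a path; but $K_3$ contains a cycle and hence is not a path, a contradiction.

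So assume $n\ge 4$. Then $\pi(G)=\omega(\mathcal P(G))=2$, say $|G|=p^{\alpha}q^{\beta}$. Every proper nontrivial subgroup of $G$ has order divisible by $p$ or by $q$; hence a proper subgroup $H$ with both $p\mid|H|$ and $q\mid|H|$ would share a common prime factor with the order of every other proper subgroup, i.e.\ would be an isolated vertex of $\mathcal P(G)$ --- impossible, since $C_n$ has none. Thus every proper subgroup of $G$ is a $p$-group or a $q$-group, so two proper subgroups are adjacent in $\mathcal P(G)$ precisely when one is a $p$-subgroup and the other a $q$-subgroup. Hence $\mathcal P(G)$ is the complete bipartite graph whose parts are the set of proper subgroups of $p$-power order and the set of proper subgroups of $q$-power order, both nonempty. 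By Theorem~\ref{order prime graph of subgroups of groups 6}(1), such a $G$ is isomorphic to $\mathbb Z_{pq}$, $\mathbb Z_q\rtimes\mathbb Z_p$, $(\mathbb Z_p\times\mathbb Z_p)\rtimes\mathbb Z_q$ or $A_4$, for which $\mathcal P(G)$ is respectively $K_2$, $K_{1,q}$, $K_{p+2,\,p^2}$ or $K_{4,4}$; in each of these the vertex degrees are $1$, or $q\ge 2$ and $1$, or $p^2\ge 4$ and $p+2\ge 4$, or $4$, so none is $2$-regular and hence none is isomorphic to $C_n$. This contradiction completes the argument, and we conclude $\mathcal P(G)\ncong C_n$ for every $n\ge 3$.

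The only step that requires genuine care --- and the one I expect to be the crux --- is the reduction in the case $n\ge 4$: recognizing that when $|G|$ has exactly two prime divisors, any proper subgroup whose order involves both of them is an isolated vertex, so connectedness of $\mathcal P(G)$ forbids such subgroups and forces $\mathcal P(G)$ into the complete bipartite form already classified in Theorem~\ref{order prime graph of subgroups of groups 6}(1). Once that is in hand, finishing is just a degree check against a short list of graphs, and the case $n=3$ is immediate from the completeness characterization in Theorem~\ref{order prime graph of subgroups of groups 6}(3).
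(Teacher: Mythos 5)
Your proof is correct, but it takes a noticeably different route from the paper's in the main case. For $n=3$ both arguments coincide: $C_3=K_3$ is complete, and Theorem~\ref{order prime graph of subgroups of groups 6}(3) forces $G\cong\mathbb Z_{pq}$ with $\mathcal P(G)\cong K_2$, a contradiction. For $n\ge 4$ the paper walks along the cycle $H_1-H_2-\cdots-H_n-H_1$, deduces the possible orders of $H_3$ and $H_4$ from successive coprimality, concludes that $n=4$ and $|G|=p^\alpha q^\beta$, and then finishes by a count of proper subgroups (asserting that $\alpha+\beta\ge 4$ forces at least five proper subgroups) together with a reference back to the $|G|=pq,\,p^2q$ analysis. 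You instead use the global invariants: $\omega(\mathcal P(G))=|\pi(G)|$ from Theorem~\ref{order prime graph of subgroups of groups 4}(3) gives $|\pi(G)|=2$, the absence of isolated vertices in $C_n$ rules out proper subgroups of order divisible by both primes (the same observation underlying Theorem~\ref{order prime graph of subgroups of groups 3}), so $\mathcal P(G)$ is forced to be complete bipartite, and then Theorem~\ref{order prime graph of subgroups of groups 6}(1) reduces everything to a degree check on $K_2$, $K_{1,q}$, $K_{p+2,p^2}$ and $K_{4,4}$, none of which is $2$-regular. Your version buys a cleaner ending: it sidesteps the paper's somewhat under-justified claim about the number of proper subgroups when $\alpha+\beta\ge 4$, replacing it with an appeal to an already-proved classification; the cost is that it leans more heavily on Theorems~\ref{order prime graph of subgroups of groups 4} and~\ref{order prime graph of subgroups of groups 6}(1), whereas the paper's walk argument is more self-contained at the level of the cycle itself. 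Both are logically sound and neither introduces circularity, since the cited theorems do not depend on this one.
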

\begin{proof}
Suppose $\mathcal P(G)$ is the cycle $H_1 - H_2 - \cdots - H_n - H_1$ of length $n$. Since $(|H_1|, |H_2|)=1 = (|H_2|, |H_3|)$, so without loss of generality, we assume that,
$|H_1|=p$, $|H_2|=q$ and  $|H_3|=r$ or $p^\alpha$. If $|H_3|=r$, then $H_1$, $H_2$, $H_3$ are adjacent and so $\mathcal P(G)$ is complete, which is not possible, by Theorem~\ref{order prime graph of subgroups of groups 6}(3). If $|H_3|=p^\alpha$, then $(|H_3|, |H_4|)=1$ implies that
$|H_4|=q^\beta$ or $r$. If $|H_4|=r$, then $H_1$, $H_2$, $H_4$ are adjacent, which is not possible. So we have $|H_4|=q^\beta$. Then $(|H_1|, |H_4|)=1$ and so $H_1$ and $H_4$ are adjacent in $\mathcal P(G)$. It follows that $n=4$ and $|G|=p^\alpha q^\beta$, $\alpha$, $\beta\geq 1$. Now we check the existence of such a group. If $\alpha+\beta\geq 4$, then $G$ has at least five proper subgroups, which is not possible. If $\alpha+\beta\leq 3$, then $|G|=p^2q$ or $pq$. In this case, we have shown in the proof of Theorem~\ref{order prime graph of subgroups of groups 6} that $\mathcal P(G)$ can not be a cycle. This completes the proof.
\end{proof}

\begin{thm}\label{order prime graph of subgroups of groups 3}
Let $G$ be a finite group. Then $\mathcal P(G)$ is connected if and only if $G$ does not have a proper subgroup $H$ with $\pi(H)=\pi(G)$. In this case, $diam (\mathcal P(G)) \in \{1,2,3\}$.
\end{thm}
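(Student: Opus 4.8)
The statement has two parts: a biconditional characterizing connectedness, and a diameter bound when $\mathcal P(G)$ is connected. The plan is to prove the contrapositive for each direction of the biconditional, and then to bound distances directly.

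For the forward direction, I would argue: if $G$ has a proper subgroup $H$ with $\pi(H)=\pi(G)$, then $H$ is divisible by every prime dividing $|G|$, so $H$ shares a common prime factor with every other proper subgroup $K$ of $G$ (each such $K$ is nontrivial, hence has some prime divisor, which also divides $|H|$). Thus $H$ is adjacent to no vertex of $\mathcal P(G)$, i.e. $H$ is an isolated vertex. Since $\mathcal P(G)$ has more than one vertex (as $G$ is neither trivial nor of prime order, and in fact has at least two proper subgroups — here I would note $G$ has subgroups of at least $k\ge 1$ distinct prime orders, and if $k=1$ then $\mathcal P(G)$ is totally disconnected with $\ge 2$ vertices by Corollary~\ref{c1}(1), while if $k\ge 2$ there are vertices of coprime order), an isolated vertex forces disconnectedness. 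So $\mathcal P(G)$ is disconnected.

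For the reverse direction, suppose $G$ has no proper subgroup $H$ with $\pi(H)=\pi(G)$; I want to show $\mathcal P(G)$ is connected, and simultaneously establish the diameter bound. Write $\pi(G)=\{p_1,\dots,p_k\}$. If $k=1$, then by the hypothesis $G$ has no proper $p_1$-subgroup at all — but $G$ is a $p$-group of order $\ge p^2$, which has a subgroup of order $p$, a contradiction. So in fact $k\ge 2$. For each $i$, $G$ has a subgroup $P_i$ of order $p_i$ (Cauchy), and these $P_i$ are pairwise adjacent in $\mathcal P(G)$, forming a clique of size $k$. Now take any proper subgroup $K$ of $G$. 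Since $\pi(K)\ne\pi(G)$, there is some prime $p_j\notin\pi(K)$; then $|K|$ is coprime to $|P_j|=p_j$, so $K$ is adjacent to $P_j$. Hence every vertex is adjacent to some member of the clique $\{P_1,\dots,P_k\}$, and since the $P_i$ are mutually adjacent, any two vertices are joined by a path of length at most $3$ (through at most two of the $P_i$'s). This proves connectedness and gives $\mathrm{diam}(\mathcal P(G))\le 3$; the lower bound $\mathrm{diam}\ge 1$ is trivial since $\mathcal P(G)$ has an edge.

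The main obstacle, such as it is, is the bookkeeping around small cases — specifically ruling out $k=1$ under the hypothesis (done above via Cauchy/existence of subgroups of prime order in a $p$-group), and confirming $\mathcal P(G)$ genuinely has at least two vertices so that "no isolated vertex" is the right notion and "diameter" is well-defined. I would also remark, for sharpness, that all three values $1,2,3$ are attained (e.g. $\mathcal P(\mathbb Z_{pq})\cong K_2$ has diameter $1$; one can exhibit examples with diameter $2$ and $3$ among groups of order $p^2q$ or $pqr$), though a full verification of attainment is not needed for the statement as written.
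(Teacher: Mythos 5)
Your argument is correct and follows essentially the same route as the paper's proof: the forward direction exhibits $H$ as an isolated vertex, and the reverse direction joins any two vertices by a path of length at most $3$ routed through subgroups of prime order (your ``dominating clique'' $\{P_1,\dots,P_k\}$ is just a repackaging of the paper's case split on whether the missing primes $p_i,p_j$ coincide). The only quibble is the degenerate case $G\cong \mathbb Z_{p^2}$, where $\mathcal P(G)$ is a single vertex and hence connected, so your appeal to Corollary~\ref{c1}(1) does not actually deliver the claimed second vertex --- but the paper's own proof silently skips this same case, so it is not a gap relative to the paper.
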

\begin{proof}
Suppose $G$ has a subgroup, say $H$ with $\pi(H)=\pi(G)$. Then $|H|$ is not relatively prime to any other subgroups of $G$.
Therefore, $\mathcal P(G)$ is disconnected.
Conversely, assume that $G$ does not have a subgroup $H$ with $\pi(H)=\pi(G)$. Suppose $\mathcal P(G)$ is complete, then $\mathcal P(G)$ is connected and $diam(\mathcal P(G))=1$. Now assume that $\mathcal P(G)$ is not complete. Let $H$ and $K$ be two non-adjacent vertices in $\mathcal P(G)$.
Then by assumption, $\pi(H)\neq\pi(G)$ and $\pi(K)\neq\pi(G)$, and so there exist $p_i$, $p_j\in \pi(G)$ such that $p_i\notin \pi(H)$ and $p_j\notin \pi(K)$. If $p_i=p_j$, then there is a path $H-H_1-K$, where $H_1$ is a subgroup of $G$ of order $p_i$. If $p_i\neq p_j$, then there is a path $H-H_1-H_2-K$, where $H_1$, $H_2$ are subgroups of $G$ of orders $p_i$, $p_j$ respectively.  It follows that $\mathcal P(G)$ is connected and $diam(\mathcal P(G))\leq 3$. Note that $diam(\mathcal P(Z_{pq}))=1$, $diam(\mathcal P(A_4))=2$ and $diam(\mathcal P(Z_{pqr}))=3$, so it shows that the diameter of $\mathcal P(G)$ takes all the values in $\{1,2,3\}$. This completes the proof.
\end{proof}
From the above theorem, for a given finite group $G$, if  $\mathcal P(G)$ is disconnected, then $G$ has a proper subgroup $H$ with $\pi(H)=\pi(G)$. It turns out that such a subgroup $H$ will be an isolated vertex of $\mathcal P(G)$. As a consequence, we have the following result.

\begin{cor}
Let $G$ be a finite group. If $\mathcal P(G)$ is disconnected, then $\mathcal P(G) \cong \mathcal G \cup \overline{K}_r$, where $\mathcal G$ is  a connected component of $\mathcal P(G)$, and $r$ is the number of proper subgroups $H$ of $G$ with $\pi(H)=\pi(G)$.
\end{cor}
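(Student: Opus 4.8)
The plan is to pin down the isolated vertices of $\mathcal P(G)$ exactly, and then to show that everything else forms a single connected component. By Theorem~\ref{order prime graph of subgroups of groups 3}, since $\mathcal P(G)$ is disconnected, $G$ has at least one proper subgroup $H$ with $\pi(H)=\pi(G)$, so the number $r$ in the statement satisfies $r\geq 1$. Write $\mathcal S$ for the set of all proper subgroups $H$ of $G$ with $\pi(H)=\pi(G)$; thus $|\mathcal S|=r$.

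First I would prove that the isolated vertices of $\mathcal P(G)$ are precisely the members of $\mathcal S$. If $H\in\mathcal S$, then every other vertex $K$ satisfies $\emptyset\neq\pi(K)\subseteq\pi(G)=\pi(H)$, so $(|H|,|K|)>1$ and $H$ is isolated. Conversely, if a vertex $H$ has $\pi(H)\neq\pi(G)$, choose a prime $p\in\pi(G)\setminus\pi(H)$; by Cauchy's theorem $G$ has a subgroup of order $p$, which is a proper nontrivial subgroup distinct from $H$ with order coprime to $|H|$, so $H$ is not isolated. Hence the isolated vertices form exactly the set $\mathcal S$, and these contribute the $\overline{K}_r$ summand of $\mathcal P(G)$.

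It then remains to treat the vertices $H$ with $\pi(H)\subsetneq\pi(G)$. If $\pi(G)=\{p\}$, then by Corollary~\ref{c1}(1) $\mathcal P(G)$ is totally disconnected, every vertex lies in $\mathcal S$, and $\mathcal P(G)\cong\overline{K}_r$; so we may assume $|\pi(G)|\geq 2$. Given two such vertices $H$ and $K$, I would reuse the path construction from the proof of Theorem~\ref{order prime graph of subgroups of groups 3}: picking $p_i\in\pi(G)\setminus\pi(H)$ and $p_j\in\pi(G)\setminus\pi(K)$, we obtain a path $H-H_1-K$ when $p_i=p_j$, or $H-H_1-H_2-K$ when $p_i\neq p_j$, where $H_1,H_2$ are subgroups of $G$ of prime orders $p_i,p_j$. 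The key point is that each such prime-order subgroup is again a non-isolated vertex: it is a proper nontrivial subgroup because $|\pi(G)|\geq 2$, and its set of prime divisors is a singleton, hence a proper subset of $\pi(G)$, so it does not lie in $\mathcal S$ by the previous paragraph. Thus the connecting path avoids $\mathcal S$, so $H$ and $K$ lie in the same component of $\mathcal P(G)$; this component is moreover nonempty, since $G$ has adjacent subgroups of two distinct prime orders. Calling it $\mathcal G$, we conclude $\mathcal P(G)\cong\mathcal G\cup\overline{K}_r$.

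The only genuinely non-routine step is the verification that the connecting paths furnished by Theorem~\ref{order prime graph of subgroups of groups 3} never pass through an isolated vertex --- equivalently, that prime-order subgroups are non-isolated once $|\pi(G)|\geq 2$; everything else is bookkeeping about the set $\mathcal S$.
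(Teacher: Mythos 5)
Your proof is correct and follows essentially the same route as the paper, which justifies the corollary by observing that the subgroups $H$ with $\pi(H)=\pi(G)$ are exactly the isolated vertices and that the remaining vertices are joined by the paths constructed in Theorem~\ref{order prime graph of subgroups of groups 3}. You merely spell out the details the paper leaves implicit (including the degenerate $p$-group case), so there is nothing to add.
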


\begin{thm}\label{order prime graph of subgroups 9}
If $G$ is a finite group, then $girth(\mathcal P(G))\in \{3, 4, \infty\}$.
\end{thm}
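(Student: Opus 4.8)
The plan is to show that if $\mathcal P(G)$ contains any cycle at all, then in fact it contains a triangle or a $4$-cycle, so that the girth is forced into $\{3,4,\infty\}$. First I would dispose of the easy case: if $\pi(G) = k \geq 3$, then $G$ has subgroups $H_1$, $H_2$, $H_3$ of orders $p_1$, $p_2$, $p_3$ respectively (prime orders), and since these orders are pairwise coprime, $H_1 - H_2 - H_3 - H_1$ is a triangle in $\mathcal P(G)$; hence $girth(\mathcal P(G)) = 3$. So it remains to handle $k = 1$ and $k = 2$. When $k = 1$, Corollary~\ref{c1}(1) says $\mathcal P(G)$ is totally disconnected, so $girth(\mathcal P(G)) = \infty$ and we are done.

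The substantive case is $k = 2$, say $|G| = p^\alpha q^\beta$. Here I would argue that if $\mathcal P(G)$ has girth at least $5$ then it actually has no cycle. Suppose for contradiction that $\mathcal P(G)$ contains a cycle of length $n \geq 5$. Since $\mathcal P(G)$ is bipartite (Corollary~\ref{c1}(2)), $n$ is even, so $n \geq 6$; write the cycle as $H_1 - H_2 - \cdots - H_n - H_1$. Because only subgroups of $p$-power order can be adjacent to subgroups of $q$-power order (two subgroups are adjacent iff their orders are coprime), the vertices alternate between the set of $p$-subgroups and the set of $q$-subgroups along the cycle. In particular $H_1, H_3$ are, say, $p$-subgroups and $H_2, H_4$ are $q$-subgroups. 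But then $|H_1|$ is coprime to $|H_4|$ and $|H_3|$ is coprime to $|H_2|$, so $H_1 - H_4$ and $H_2 - H_3$ are both edges of $\mathcal P(G)$; together with the path edges $H_1-H_2$, $H_3-H_4$ this produces the $4$-cycle $H_1 - H_2 - H_3 - H_4 - H_1$ (note $H_1,H_2,H_3,H_4$ are distinct since $n\ge 6$). Hence $girth(\mathcal P(G)) \leq 4$, contradicting the assumption. Therefore when $k=2$ and $\mathcal P(G)$ contains any cycle, its girth is $3$ or $4$, and otherwise it is $\infty$.

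Combining the three cases gives $girth(\mathcal P(G)) \in \{3, 4, \infty\}$ for every finite group $G$. The only delicate point is the alternation argument in the $k=2$ case — one must be careful that "coprime order" for subgroups of a group of order $p^\alpha q^\beta$ forces each vertex to be a pure $p$-group or pure $q$-group before it can be adjacent to anything, and that the chord construction genuinely yields four distinct vertices; I expect this bipartite-alternation step to be the main (though modest) obstacle, and everything else is immediate from the earlier corollaries. It may be worth remarking that all three values are attained: $\mathcal P(\mathbb Z_{pqr})$ has a triangle so girth $3$, $\mathcal P(\mathbb Z_{p^2q})$ for suitable $G$ gives girth $4$, and $\mathcal P(\mathbb Z_{p^\alpha})$ has girth $\infty$.
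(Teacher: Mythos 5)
Your proof is correct, and in the main case $k=2$ it takes a genuinely different and cleaner route than the paper. The paper also reduces to the bipartite situation via Corollary~\ref{c1}, but then runs a group-theoretic case analysis on $|G|$ ($p^\alpha q^\beta$ with $\alpha,\beta\geq 2$; $p^\alpha q$ split by whether the Sylow $q$-subgroup is unique; $pq$; $p^\alpha$), explicitly exhibiting a $4$-cycle or proving acyclicity in each case. You instead observe that in a group of order $p^\alpha q^\beta$ any vertex lying on an edge must be a pure $p$- or pure $q$-subgroup (a subgroup of order divisible by both primes is isolated), that adjacency then forces alternation around any cycle, and that four consecutive vertices of a cycle of length $\geq 6$ therefore already span a $4$-cycle via the chord $H_1H_4$. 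In effect you are using that the non-isolated part of $\mathcal P(G)$ is complete bipartite, so its girth is $4$ or $\infty$; this avoids the classification entirely and is shorter, at the cost of not identifying which groups realize girth $4$ versus $\infty$ (information the paper's case analysis does yield and reuses later). Your alternation and distinctness checks are sound. One small correction to your closing remark: $\mathcal P(\mathbb Z_{p^2q})$ does \emph{not} have girth $4$ --- its only $q$-power subgroup is the unique subgroup of order $q$, so one side of the bipartition is a single vertex and the graph is acyclic; a correct witness for girth $4$ is $\mathbb Z_{p^2q^2}$ (whose coprime graph is $K_{2,2}$ plus isolated vertices) or $A_4$. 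Since the theorem only asserts membership in $\{3,4,\infty\}$, this slip does not affect the proof.
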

\begin{proof}
Let $G$ be a group of order $p_1^{\alpha_1} p_2^{\alpha_2}\ldots p_k^{\alpha_k}$, where $p_i$'s are distinct primes and $\alpha_i\geq 1$. If $k\geq 3$, then any three subgroups of $G$ of distinct prime orders are mutually adjacent in $\mathcal P(G)$ and so  $\mathcal P(G)$ contains $C_3$ as a subgraph. It follows that $girth(\mathcal P(G))=3$. If $k\leq 2$, then by Corollary~\ref{c1}(2), $\mathcal P(G)$ is bipartite and so $\mathcal P(G)$ can not contain an odd cycle. Now we consider the following cases:

\noindent\textbf{Case a:} $|G|=p^\alpha q^\beta$, $\alpha$, $\beta \geq 2$. Here $G$ has subgroups of orders $p$, $p^2$, $q$, $q^2$, let them be $H_1$, $H_2$, $H_3$, $H_4$ respectively. Then $p(G)$ contains the cycle $H_1-H_3-H_2-H_4-H_1$ and so $girth(\mathcal P(G))=4$.

\noindent \textbf{Case b:} $|G|=p^\alpha q$, $\alpha\geq 2$. Suppose Sylow $q$-subgroup of $G$ is not unique, then $G$ has at least two Sylow $q$-subgroup, let them be $H_1$, $H_2$ and $G$ has subgroups of order $p$, $p^2$, let them be $H_3$, $H_4$ respectively. Then $\mathcal P(G)$ contains the cycle $H_1-H_3-H_2-H_4-H_1$ and so $girth(\mathcal P(G))=4$. Suppose Sylow $q$-subgroup of $G$ is unique, then in the bipartition of the vertex set of $\mathcal P(G)$, one partition contains only the Sylow $q$-subgroup of $G$ and another partition contains the remaining subgroups of $G$. It follows that $\mathcal P(G)$ does not contains a cycle, so $girth(\mathcal P(G))$ is $\infty$.

\noindent\textbf{Case c:} $|G|=pq$. By Theorem~\ref{order prime graph of subgroups of groups 6}(3), $\mathcal P(G)$ is a path and so $girth(\mathcal P(G))$ is $\infty$.

\noindent\textbf{Case d:} $|G|=p^\alpha$. By Corollary~\ref{c1}(1), $\mathcal P(G)$ is totally disconnected and so $girth(\mathcal P(G))$ is $\infty$.

Proof follows by combining all the above cases together.
\end{proof}

\begin{thm}\label{order prime graph of subgroups 13}
Let $n=p_1^{\alpha_1}p_2^{\alpha_2}\ldots p_k^{\alpha_k}$, where $p_i$'s are distinct primes and $\alpha_i\geq 1$. If $H$ is a proper subgroup of $\mathbb Z_n$ of order $p_{i_1}^{\alpha_{i_1}}p_{i_2}^{\alpha_{i_2}} \ldots p_{i_r}^{\alpha_{i_r}}$, then $\deg_{\mathcal P(\mathbb Z_n)}(H)= \displaystyle \prod_{\substack{j\notin \{i_1, i_2, \ldots ,i_r\}}}(\alpha_j+1)-1$.
\end{thm}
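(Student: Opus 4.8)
The plan is to exploit the fact that subgroups of $\mathbb{Z}_n$ are in bijection with divisors of $n$: for each divisor $d \mid n$ there is a unique subgroup of order $d$. First I would fix the proper subgroup $H$ of order $m := p_{i_1}^{\alpha_{i_1}}\cdots p_{i_r}^{\alpha_{i_r}}$, and recall that a vertex $K$ (corresponding to a divisor $e$ of $n$) is adjacent to $H$ precisely when $\gcd(m,e)=1$, i.e.\ when $e$ is built only from the primes $p_j$ with $j \notin \{i_1,\ldots,i_r\}$. So the neighbours of $H$ correspond exactly to the divisors $e$ of $N := \prod_{j \notin \{i_1,\ldots,i_r\}} p_j^{\alpha_j}$.

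Next I would count those divisors. The number of divisors of $N$ is $\prod_{j \notin \{i_1,\ldots,i_r\}}(\alpha_j+1)$ by the standard formula for the number of divisors. From this count I must remove any divisor that does not give a valid neighbour: namely $e=1$ (the trivial subgroup is not a vertex of $\mathcal{P}(\mathbb{Z}_n)$), and $e=m'$ if $m'$ happens to equal $n$ (which would make $K$ improper, hence not a vertex) — but since $H$ is a proper subgroup, the complementary part $N$ is a proper divisor unless $r=0$, and $r \geq 1$ because $H$ is a nontrivial proper subgroup involving at least one prime; more carefully, I should note $e$ ranges over divisors of $N$ with $N \mid n$ properly, so every such $e$ other than $1$ yields a proper nontrivial subgroup $K \neq H$. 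Also $e = m$ is impossible here since $\gcd(m,N)=1$ and $m>1$, so $H$ is never counted among its own neighbours. Therefore $\deg_{\mathcal{P}(\mathbb{Z}_n)}(H) = \prod_{j \notin \{i_1,\ldots,i_r\}}(\alpha_j+1) - 1$.

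The only subtle point — and the one I would be careful to spell out — is the bookkeeping of which divisors $e$ of $N$ actually correspond to legitimate vertices of $\mathcal{P}(\mathbb{Z}_n)$: one must exclude $e=1$ (not a proper subgroup in the paper's convention, since the trivial group is excluded) and confirm that $e=N$ gives a proper subgroup, which holds because $r \geq 1$ forces $N < n$. I expect no real obstacle here; the main thing is just to present the divisor–subgroup correspondence cleanly and to justify the ``$-1$'' as accounting for the excluded trivial subgroup (equivalently, subtracting $1$ for the divisor $e=1$). A one-line remark that $\mathbb{Z}_n$ has a unique subgroup of each order dividing $n$, combined with the coprimality criterion, reduces the whole statement to the elementary divisor count.
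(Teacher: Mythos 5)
Your proposal is correct and follows essentially the same route as the paper: use the uniqueness of the subgroup of each order dividing $n$ to identify neighbours of $H$ with divisors of $\prod_{j\notin\{i_1,\ldots,i_r\}}p_j^{\alpha_j}$, count them as $\prod_{j\notin\{i_1,\ldots,i_r\}}(\alpha_j+1)$, and subtract $1$ for the trivial subgroup. Your extra bookkeeping (checking that the divisor $N$ itself gives a proper subgroup and that $H$ is never its own neighbour) is a welcome refinement but does not change the argument.
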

\begin{proof}
It is well known that for every divisor $d$ of $n$, $\mathbb Z_n$ has a unique subgroup of order $d$. Let $K$ be a subgroup of $\mathbb Z_n$ which is adjacent with $H$ in $\mathcal P(\mathbb Z_n)$. Then $(|H|, |K|)=1$ and  $|K|=p_{j_1}^{\alpha_{j_1}}p_{j_2}^{\alpha_{j_2}}\ldots p_{j_s}^{\alpha_{j_s}}$, with $j_1$, $j_2$, $\ldots$, $j_s\notin \{i_1, i_2, \ldots, i_r\}$. But for each $j\notin\{i_1, i_2, \ldots, i_r\}$,  the power of $p_j$ can be chosen in  $(\alpha_j+1)$ ways. It follows that, such a subgroup $K$ can be chosen in $\displaystyle \prod_{\substack{ j\notin \{i_1, i_2, \ldots ,i_r\}}}(\alpha_j+1)$ ways. Excluding the trivial subgroup,  we have $\displaystyle \prod_{\substack{ j\notin \{i_1, i_2, \ldots, i_r\}}}(\alpha_j+1)-1$ subgroups in $\mathbb Z_n$ which are adjacent with $H$ in $\mathcal P(\mathbb Z_n)$. This completes the proof.
\end{proof}
%Since for every divisor of $n$, $\mathbb Z_n$ has a unique subgroup for that divisor, so set of all subgroups of order relatively prime to order of $H$. That is number of divisors of $n$, which are relatively prime to $|H|$. Therefore, $\deg_{\mathcal P(\mathbb Z_n)}(H)= \displaystyle \prod_{\substack{j\notin \{i_1, i_2, \ldots i_r\}\\ j=1}}^{k}(\alpha_j+1)-1$.

\begin{thm}\label{order prime graph of subgroups 15}
If $\mathcal G$ is a simple graph on $m$ vertices, then there exist $m'\in \mathbb N$ such that $\mathcal G$ is an induced subgraph of $\mathcal P(Z_{m'})$.
\end{thm}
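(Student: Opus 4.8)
The plan is to realize $\mathcal G$ as an induced subgraph of $\mathcal P(\mathbb Z_{m'})$ by taking $m'$ to be a squarefree product of suitably chosen distinct primes. Recall that the nontrivial proper subgroups of $\mathbb Z_{m'}$ correspond bijectively to the divisors $d$ of $m'$ with $1<d<m'$, and that two of them are adjacent in $\mathcal P(\mathbb Z_{m'})$ precisely when the two divisors are coprime. So, writing $V(\mathcal G)=\{v_1,\dots,v_m\}$, it suffices to produce pairwise distinct divisors $d_1,\dots,d_m$ of some $m'$, each with $1<d_i<m'$, such that $\gcd(d_i,d_j)=1$ if and only if $v_i$ and $v_j$ are adjacent in $\mathcal G$; the map sending $v_i$ to the unique subgroup of $\mathbb Z_{m'}$ of order $d_i$ will then be an isomorphism of $\mathcal G$ onto an induced subgraph of $\mathcal P(\mathbb Z_{m'})$.

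To build the $d_i$, I would fix pairwise distinct primes: one prime $q_{\{i,j\}}$ for each non-edge $\{v_i,v_j\}$ of $\mathcal G$, one ``private'' prime $r_i$ for each vertex $v_i$, and one extra ``dummy'' prime $s$. Then put
\[
d_i:=r_i\prod_{k}q_{\{i,k\}}\,,\qquad\qquad m':=s\,\prod_{i}r_i\,\prod_{\{i,j\}}q_{\{i,j\}}\,,
\]
where $k$ runs over all indices with $v_k$ not adjacent to $v_i$, and $\{i,j\}$ runs over all non-edges of $\mathcal G$. Each $d_i$ divides $m'$, exceeds $1$ (it has the factor $r_i$), and is strictly less than $m'$ (since $s\nmid d_i$), so $d_i$ names a genuine vertex of $\mathcal P(\mathbb Z_{m'})$; and the $d_i$ are pairwise distinct because $r_i\mid d_i$ but $r_i\nmid d_j$ for $j\neq i$.

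The heart of the matter is the verification that, for $i\neq j$, $\gcd(d_i,d_j)>1$ holds exactly when $v_i$ and $v_j$ are non-adjacent in $\mathcal G$. Here one observes that a prime dividing $d_i$ is either $r_i$ or some $q_{\{i,k\}}$ with $v_k$ not adjacent to $v_i$; since the $r$'s are private and $q_{\{i,k\}}=q_{\{j,\ell\}}$ forces $\{i,k\}=\{j,\ell\}$, and hence $k=j$ and $\ell=i$ as $i\neq j$, the only prime that can divide both $d_i$ and $d_j$ is $q_{\{i,j\}}$ --- and this prime is defined and divides both $d_i$ and $d_j$ exactly when $\{v_i,v_j\}$ is a non-edge. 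This gives the required equivalence, and the theorem follows. The only real work is this bookkeeping --- excluding accidental common prime factors, and sidestepping the two degenerate situations $d_i=1$ (which would occur for a vertex adjacent to all others) and $d_i=d_j$ for some $i\neq j$ (which can occur when $v_i$ and $v_j$ are each other's only non-neighbour in $\mathcal G$). The private primes $r_i$ together with the dummy prime $s$ are introduced precisely to rule these out, after which everything is routine.
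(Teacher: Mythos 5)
Your proof is correct, and it takes a genuinely different route from the paper's. The paper assigns one prime to each \emph{maximal independent set} of $\mathcal G$ and labels each vertex by the product of the primes of the maximal independent sets containing it; since two vertices are non-adjacent exactly when they lie in a common maximal independent set, this also encodes non-adjacency as ``sharing a prime factor,'' but it then has to patch up the labelling (replacing repeated labels by distinct prime powers) to make the labels distinct, and it does not explicitly guard against a label coinciding with $m'$ (e.g.\ for $\mathcal G=\overline{K}_2$ the patched labels $p_1,p_1^2$ give $m'=p_1^2$, and the second label then names $\mathbb Z_{m'}$ itself rather than a proper subgroup). Your construction --- one prime per non-edge, a private prime per vertex, and a dummy prime $s$ --- handles both of these issues structurally: the private primes force the $d_i$ to be distinct and greater than $1$, and $s$ forces $d_i<m'$, with no case analysis or relabelling step. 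The trade-off is essentially aesthetic: the paper's labels are ``smaller'' when $\mathcal G$ has few maximal independent sets, whereas yours always uses $O(m^2)$ primes (in fact fewer than the paper's in the worst case, since the number of maximal independent sets can be exponential in $m$), but for a pure existence statement this is irrelevant. Your writeup is complete and, if anything, more watertight than the original.
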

\begin{proof}
 Let $n$ be the number of maximal independent sets of $\mathcal G$. Now assign $n$ distinct primes for each of these maximal independent sets. Let $v$ be a fixed vertex of $\mathcal G$. If $v$ belongs to  $t$ maximal independent sets of $\mathcal G$, then label to $v$, the product of primes which are assigned to these $t$ maximal independent sets. Similarly we can label the other vertices of $\mathcal G$.  If all these labeling are distinct, then keep them as it is. Otherwise, in order to make the labeling distinct, we relabel the vertices by using different powers of these primes.  Now let $m'$ be the least common multiple of the labels assigned to vertices of $\mathcal G$.  Again relabel each of these labels by subgroup of $\mathcal P(\mathbb{Z}_{m'})$ whose order is the same label.  Then it turns out that $\mathcal G$ is an induced subgraph of $\mathcal P(\mathbb{Z}_{m'})$. Hence the proof.
\end{proof}

Now we illustrate Theorem~\ref{order prime graph of subgroups 15} in the following example.

\begin{example}\normalfont
Consider the graph $\mathcal G$ as shown in figure~\ref{fig: f3}.
 \begin{figure}[ ht ]
\begin{center}
 \includegraphics[scale=1]{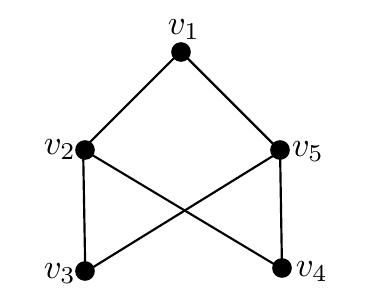}
\caption{The graph $\mathcal G$}
\label{fig: f3}
\end{center}
\end{figure}

Here $I_1:=\{v_1, v_3, v_4\}$, $I_2:=\{v_2, v_5\}$ are the only maximal independent subsets of $\mathcal G$. First we assign  prime $p_i$ to $I_i$ for each $i=1$, 2. Here $v_1\in I_1$, so label $v_1$ by $p_1$; $v_2\in I_2$, so label $v_2$ by $p_2$; $v_3\in I_1$, so label $v_3$ by $p_1$; $v_4\in I_1$, so label $v_4$ by $p_1$; $v_5\in I_2$, so label $v_5$ by $p_2$. Since $v_1$, $v_3$, $v_4$ have the same label, and $v_2$, $v_5$ have the same label, so we  relabel them in the following way: label $v_1$, $v_3$, $v_4$ by $p_1$, $p_1^2$, $p_1p_2$ respectively, and $v_2$, $v_5$ by $p_2$, $p_2^2$ respectively. Let $m'=p_1^2p_2^2$. Again relabel each of these labels by subgroup of $\mathbb{Z}_{m'}$ whose order is the same label.  It follows that $\mathcal G$ is the induced subgraph of $\mathbb{Z}_{m'}$.

%Let $v_1$, $v_2$, $\ldots$, $v_6$ be the vertices of $\mathcal G$. Here $I_1:=\{v_1\}$, $I_2:=\{v_2, v_6\}$, $I_3:\{v_3, v_5\}$, $I_4:=\{v_4, v_5, v_6\}$ are the maximal independent subset of $\mathcal G$ and assign for each $i=1$, 2, 3, 4, $p_i$ to $I_i$. Here $v_1\in I_1$, so relabel $v_1$ by $p_1$; $v_2\in I_2$, so relabel $v_2$ by $p_2$; $v_3\in I_3$, so relabel $v_3$ by $p_3$; $v_4\in I_4$, so relabel $v_4$ by $p_4$; $v_5\in I_3$, $I_4$, so relabel $v_5$ by $p_3p_4$; $v_6\in I_2$, $I_4$, so relabel $v_6$ by $p_2p_4$. Again relabel the vertex $p_1$ by subgroup of $G$ whose order is $p_1$; relabel the vertex $p_2$ by subgroup of $G$ whose order is $p_2$; relabel the vertex $p_3$ by subgroup of $G$ whose order is $p_3$; relabel the vertex $p_4$ by subgroup of $G$ whose order is $p_4$; relabel the vertex $p_3p_4$ by subgroup of $G$ whose order is $p_3p_4$; relabel the vertex $p_2p_4$ by subgroup of $G$ whose order is $p_2p_4$. Let $m'$ be the least common multiple of vertices of $\mathcal G$. It follows that $m'=p_1p_2p_3p_4$. Hence $\mathcal G$ is an induced subgraph of $\mathcal P(\mathbb Z_{m'})$.
\end{example}

%\begin{thm}\label{order prime graph of subgroups of groups 17}
%Let $G$ be a finite group. Then $\mathcal P(G)$ is $K_{n+1}$-free if and only if $|G|=p_1^{\alpha_1}p_2^{\alpha_2}\ldots p_n^{\alpha_n}$, where $p_i$'s are distinct primes and $\alpha_i\geq 1$.
%\end{thm}
%\begin{proof}
%Let $G$ be a group of order $p_1^{\alpha_1}p_2^{\alpha_2}\ldots p_n^{\alpha_n}$, where $p_i$'s are distinct primes and $\alpha_i\geq 1$. Then for each $i = 1, 2, \ldots , n$, $G$ has at least one subgroup of order $p_i$, and so they are adjacent with each other. So they form $K_n$ as a subgraph of $\mathcal P(G)$. Since  $\mathcal P(G)$ is a $n$-partite graph, by Theorem [ ], so it follows that $K_n$ is the maximal clique with maximum number of vertices. Hence $\mathcal P(G)$ is $K_n$-free if and only if $|G|$ should contains less than or equal to $n-1$ primes and proof follows.
%\end{proof}

\section{Coprime graph of subgroups of groups which are planar, unicyclic, with forbidden subgraphs }

The famous Kuratowski's Theorem (see \cite[Theorem~11.13]{harary}) characterized planar graphs as those graphs which does not contain subgraphs homeomorphic to either $K_5$ or $K_{3,3}$. We repeatedly use this theorem to show the  non-planarity of the coprime graph of subgroups of groups. The explicit plane embedding will be given when the coprime graph of subgroups of a group is planar.

The main aim of this section is to prove the following result.
\begin{thm}\label{planarity of prime graph 9}
Let $G$ be a finite group. Then
\begin{enumerate}[\normalfont (1)]
\item $\mathcal P(G)$ is planar if and only if $G$ is  one of  a $p$-group, $\mathbb Z_q\rtimes P$, $\mathbb Z_q\times  P$, where $P$ is a $p$-group, $\langle a, b, c~|~a^q=b^p=c^p=1, ac=ca, bc=cb, bab^{-1}=a^i, ord_q(i)=p\rangle$, $\mathbb Z_q\rtimes_2 \mathbb Z_{p^2}$, $\mathbb Z_{p^2}\rtimes \mathbb Z_q$, $D_{12}$, $\mathbb Z_{p^2}\rtimes \mathbb Z_{q^2}$, $\mathbb Z_{p^2}\rtimes (\mathbb Z_q\times \mathbb Z_q)$, $\mathbb Z_9\rtimes \mathbb Z_4$, $D_{18}$,  $|G|=p^\alpha q^2(\alpha\geq 3)$ with $G$ has unique cyclic Sylow  $q$-subgroup, $\mathbb Z_{pqr}$, $\mathbb Z_{p^2qr}$ or $\mathbb Z_{pqrs}$;
% or the groups listed in Theorem~\ref{planarity of prime graph 1};
		
\item $\mathcal P(G)$ is $K_{2,3}$-free if and only if $G$ is one of a $p$-group, $\mathbb Z_q\rtimes P$, $\mathbb Z_q\times  P$, where $P$ is a $p$-group,  $\langle a, b, c~|~a^q=b^p=c^p=1, ac=ca, bc=cb, bab^{-1}=a^i, ord_q(i)=p\rangle$, $\mathbb Z_q\rtimes_2 \mathbb Z_{p^2}$, $D_{12}$, $\mathbb Z_{p^2q^2}$ or $\mathbb Z_{pqr}$;
		
\item $\mathcal P(G)$ is $K_{2,2}$-free if and only if $G$ is one of $p$-group, $\mathbb Z_q\times  P$, $\mathbb Z_q\rtimes P$,  where $P$ is a  $p$-group,  $\langle a, b, c~|~a^q=b^p=c^p=1, ac=ca, bc=cb, bab^{-1}=a^i, ord_q(i)=p\rangle$, $\mathbb Z_q\rtimes_2 \mathbb Z_{p^2}$, $D_{12}$ or $\mathbb Z_{pqr}$;
		
\item $\mathcal P(G)$ is $K_{1,4}$-free if and only if $G$ is one of $p$-group, $\mathbb Z_{p^\alpha q}(\alpha=1, 2, 3)$, $\mathbb Z_{p^2q^2}$, $\mathbb Z_{p^3q^2}$, $\mathbb Z_{p^3q^3}$, $S_3$ or $\mathbb Z_{pqr}$;
		
\item $\mathcal P(G)$ is $K_{1,3}$-free if and only if $G$ is one of a $p$-group, $\mathbb Z_{p^\alpha q}$ $(\alpha= 1, 2)$, $\mathbb Z_{p^2q^2}$ or $\mathbb Z_{pqr}$;
		
\item $\mathcal P(G)$ is $K_{1,2}$-free if and only if $G$ is  either a $p$-group or $\mathbb Z_{pq}$;
		
\item $\mathcal P(G)$ is unicyclic if and only if $G$ is  either $\mathbb Z_{p^2q^2}$ or $\mathbb Z_{pqr}$.
\end{enumerate}
\end{thm}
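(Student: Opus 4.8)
The seven parts all run along the same lines, so the plan is to install a uniform structural description of $\mathcal P(G)$ first and then reduce each equivalence to a finite check. For a finite group $G$ with $\pi(G)=\{p_1,\dots,p_k\}$, partition $V(\mathcal P(G))$ by prime support: for each nonempty $S\subseteq\{1,\dots,k\}$ let $V_S$ be the set of proper nontrivial subgroups $H$ with $\pi(H)=\{p_i:i\in S\}$. Two vertices are adjacent precisely when their supports are disjoint, so each $V_S$ is independent, $V_S$ is completely joined to $V_T$ iff $S\cap T=\emptyset$, and $V_{\{1,\dots,k\}}$ (when nonempty) consists of isolated vertices; thus $\mathcal P(G)$ is a ``blow-up'' of the set-disjointness graph on the subsets that occur, determined by which $S$ occur together with the sizes $|V_S|$. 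For $G=\mathbb Z_n$, $n=\prod p_i^{\alpha_i}$, every $S$ occurs and $|V_S|=\prod_{i\in S}\alpha_i$ (one less when $S$ is everything), as in the counting behind Theorem~\ref{order prime graph of subgroups 13}; for general $G$, Sylow theory gives $|V_{\{i\}}|\ge\alpha_i$, with $|V_{\{i\}}|=1$ exactly when the Sylow $p_i$-subgroup is cyclic of prime order and normal. I would also record that all seven properties pass to every induced subgraph obtained by deleting a union of blocks $V_S$: planarity and ``no induced copy of $H$'' are monotone, and so is ``has at most one cycle'', which is the condition that matters for unicyclicity.

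For the ``only if'' direction the strategy is to show that a group outside the stated list forces inside $\mathcal P(G)$ either a subdivision of $K_5$ or $K_{3,3}$ (nonplanarity) or an induced copy of the relevant forbidden graph (or a second, independent cycle). The crudest step: one subgroup chosen from each $V_{\{i\}}$ spans an induced $K_k$, so $k\ge 5$ simultaneously destroys planarity and everything weaker, and large $k$ likewise kills the $K_{1,t}$- and $K_{2,t}$-conditions and unicyclicity. Within each surviving $k$ one then bounds the $\alpha_i$ and the Sylow numbers. If $k=2$ then $\mathcal P(G)=K_{a,b}\cup\overline{K}_c$ with $a=|V_{\{p\}}|$, $b=|V_{\{q\}}|$, and the conditions become explicit constraints on $(a,b)$ — $\min\{a,b\}\le 2$ for planar, $\min\{a,b\}\le 1$ for $K_{2,2}$-free, $\max\{a,b\}\le 3$ for $K_{1,4}$-free, $(a,b)=(2,2)$ for unicyclic, and so on — which translate back into structural constraints on the Sylow subgroups, hence on $|G|$. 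If $k=3$ one uses that every block $V_{\{i,j\}}$ hangs as pendants off the central $K_3$ formed by the singleton blocks, so unicyclicity forces each $|V_{\{i\}}|=1$ and planarity forces the singleton blocks small; if $k=4$ one argues that $\mathbb Z_{pqrs}$ is the only possibility, again by locating a $K_{3,3}$ as soon as some block has size $\ge 2$. After these reductions only the group orders $p^2q$, $p^2q^2$, $p^3q$, $p^3q^2$, $p^3q^3$, $pqr$, $p^2qr$, $pqrs$ (and their small companions) remain, and here I would invoke the explicit classifications of groups of order $p^2q$ and $p^2q^2$ already used in the proof of Theorem~\ref{order prime graph of subgroups of groups 6}, list the subgroup orders for each group, and read off $\mathcal P(G)$.

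The ``if'' direction is then mechanical: for each group on each list, write down $\mathcal P(G)$ from its lattice of subgroup orders via the blow-up description and verify the property — draw an explicit plane embedding for the planar families ($\mathbb Z_q\rtimes P$, $\mathbb Z_q\times P$, $\mathbb Z_{p^2}\rtimes\mathbb Z_{q^2}$, $\mathbb Z_{p^2}\rtimes(\mathbb Z_q\times\mathbb Z_q)$, $D_{12}$, $D_{18}$, the $p^\alpha q^2$ groups with unique cyclic Sylow $q$-subgroup, and the cyclic groups), and inspect the now-small graphs in the remaining parts. The bookkeeping is cut down by the implications $K_{2,2}$-free $\Rightarrow$ $K_{2,3}$-free, $K_{1,2}$-free $\Rightarrow$ $K_{1,3}$-free $\Rightarrow$ $K_{1,4}$-free, and unicyclic $\Rightarrow$ planar.

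The hard part is making the reductions tight and not missing the exceptional small groups. Because the subgroup counts of groups such as $A_4$, $S_3$, $D_{12}$, $D_{18}$, $\mathbb Z_9\rtimes\mathbb Z_4$, and the non-abelian groups of order $p^2q$ with $q\mid p+1$ deviate from the generic $p^\alpha q^\beta$ pattern, each of these must be checked individually; and distinguishing the genuinely borderline families from one another — why $\mathbb Z_{p^2q^2}$ is planar while $\mathbb Z_{p^3q^3}$ is not, why the $p^\alpha q^2$ case needs the ``unique cyclic Sylow $q$-subgroup'' hypothesis, and the precise four-prime case — is where one either has to exhibit an honest $K_{3,3}$- or $K_5$-subdivision or produce an embedding, and it is exactly here that an overlooked configuration is easiest to slip through.
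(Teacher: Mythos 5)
Your proposal is correct and follows essentially the same route as the paper: a case split on $|\pi(G)|$ and on the exponents, Sylow-theoretic bounds on the number of subgroups of prime-power order, the classifications of groups of order $p^2q$ and $p^2q^2$, and explicit $K_{3,3}$- or $K_5$-subdivisions versus plane embeddings in the borderline cases. Your ``blow-up of the disjointness graph'' description, which reduces the two-prime case to the pair $(a,b)=\bigl(|V_{\{p\}}|,|V_{\{q\}}|\bigr)$ with $\mathcal P(G)\cong K_{a,b}\cup\overline{K}_c$, is a cleaner packaging of what the paper states case by case, but the substance --- including the delicate configurations such as the $K_5$-subdivision in $\mathcal P(\mathbb Z_{p^2q^2r})$ that your coarse block-size heuristics do not by themselves detect --- is the same.
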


First we start with the following result:

\begin{pro}\label{planarity of prime graph 2}
If $G$ is a group whose order has atleast five distinct prime factors, then $\mathcal P(G)$ contains $K_5$.
\end{pro}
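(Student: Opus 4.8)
The plan is to exhibit an explicit $5$-clique inside $\mathcal P(G)$ coming from subgroups of distinct prime order. Write $|G| = p_1^{\alpha_1}p_2^{\alpha_2}\cdots p_k^{\alpha_k}$ with the $p_i$ distinct primes and $k \geq 5$. By Cauchy's theorem, for each $i \in \{1,2,3,4,5\}$ the group $G$ contains an element of order $p_i$, and hence a (cyclic) subgroup $H_i$ with $|H_i| = p_i$.

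Next I would check that $H_1, \ldots, H_5$ are genuine vertices of $\mathcal P(G)$, i.e.\ proper subgroups of $G$: since $k \geq 5$, the order $|G|$ is divisible by at least five distinct primes, so $p_i$ is a proper divisor of $|G|$ for each $i$, forcing $H_i \neq G$. They are pairwise distinct, since they have pairwise distinct orders $p_1, \ldots, p_5$. Finally, for any $i \neq j$ we have $\gcd(|H_i|,|H_j|) = \gcd(p_i,p_j) = 1$, so $H_i$ and $H_j$ are adjacent in $\mathcal P(G)$ by definition. Therefore the set $\{H_1,H_2,H_3,H_4,H_5\}$ induces a complete subgraph on five vertices, i.e.\ $\mathcal P(G)$ contains $K_5$.

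There is essentially no obstacle here beyond invoking Cauchy's theorem and checking properness; the same argument in fact shows $\mathcal P(G)$ contains $K_k$ whenever $|G|$ has $k$ distinct prime factors, but the statement as given only needs the case $k \geq 5$, which is precisely what is required for Kuratowski's criterion in the subsequent planarity results.
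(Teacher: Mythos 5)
Your proof is correct and follows exactly the paper's argument: take subgroups of five distinct prime orders (which exist by Cauchy's theorem), note they are proper and have pairwise coprime orders, hence form a $K_5$. You simply spell out the properness and distinctness checks that the paper leaves implicit.
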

\begin{proof}
Since $G$ has atleast five prime factors, it has atleast five subgroups of distinct prime orders and so they are adjacent with each other in $\mathcal P(G)$. 
It follows that $\mathcal P(G)$ contains $K_5$.
\end{proof}

\begin{pro}\label{planarity of prime graph 3}
	Let $G$ be a group whose order has four distinct prime factors. Then 
	\begin{enumerate}[\normalfont (1)]
	\item $\mathcal P(G)$ is planar if and only if $G\cong \mathbb Z_{pqrs}$;
	\item $\mathcal P(G)$ contains $K_{2,3}$ and $K_{1,4}$.
	\end{enumerate} 
\end{pro}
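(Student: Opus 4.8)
Throughout, write $|G| = p^{a}q^{b}r^{c}s^{d}$ with $a,b,c,d \ge 1$ and $p,q,r,s$ distinct primes. The whole argument will be organized around the partition of $V(\mathcal P(G))$ into the four classes $\mathcal A_1,\mathcal A_2,\mathcal A_3,\mathcal A_4$ coming from Theorem~\ref{order prime graph of subgroups of groups 4}(1) (so $\mathcal A_i$ collects proper subgroups whose order is "first divisible" by $p_i$), together with the observation that $G$ always contains at least one subgroup of each prime order $p,q,r,s$ by Cauchy's theorem, and at least one subgroup of each of the six orders $pq, pr, ps, qr, qs, rs$ by solvability of $G$ (a group whose order has four prime factors with at least one exponent $\ge 1$ need not be solvable — e.g. if it contains $A_5$ — so I should instead argue directly: for any two primes $u,v \mid |G|$, take a Sylow $u$-subgroup $U$ and a Sylow $v$-subgroup $V$; if $UV$ is not all of $G$ it gives a proper subgroup of order divisible by $uv$, and if $UV = G$ then $|G| = u^{a}v^{b}$, contradicting that $|G|$ has four prime divisors; so in all cases a proper subgroup of order divisible by $uv$ exists). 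This existence bookkeeping is the engine for both parts.

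\medskip
\noindent\textbf{Part (2): $\mathcal P(G)$ contains $K_{2,3}$ and $K_{1,4}$.}
For $K_{1,4}$: pick a subgroup $S$ of order $s$ (or of prime order $s^{c}$-part, any subgroup of $s$-power order will do, say of order $s$), and subgroups $H_p, H_q, H_r$ of orders $p, q, r$ respectively, together with a second subgroup $H'$ of order coprime to $s$ and distinct from all of $H_p,H_q,H_r$ — for this take a subgroup of order $pq$ (exists by the previous paragraph). Then $S$ is adjacent to all of $H_p, H_q, H_r, H'$ and these four are pairwise distinct, giving $K_{1,4}$. For $K_{2,3}$: take two subgroups $U_1, U_2$ of order $s$ (if $n_s(G) \ge 2$) — but $G$ need not have two Sylow $s$-subgroups, so instead use the two *distinct* subgroups of $r$-power-coprime type on one side; cleaner: let one side be $\{H_r, H_s\}$ (orders $r$ and $s$) and the other side be $\{H_p, H_q, H_{pq}\}$ where $H_{pq}$ has order $pq$. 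Every vertex on the right has order coprime to both $r$ and $s$, so all six edges are present, and the five vertices are pairwise distinct; this is $K_{2,3}$. I should double-check degenerate cases where some of these named subgroups could accidentally coincide, but distinct orders or distinct subgroup-pairs prevent that.

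\medskip
\noindent\textbf{Part (1): $\mathcal P(G)$ is planar iff $G \cong \mathbb Z_{pqrs}$.}
($\Leftarrow$) When $G = \mathbb Z_{pqrs}$, the proper subgroups are exactly the subgroups of order $d$ for each proper divisor $d \mid pqrs$, one per divisor, so there are $14$ vertices, and adjacency of the subgroups of orders $d_1, d_2$ is exactly $\gcd(d_1,d_2)=1$. I would exhibit an explicit planar embedding of this fixed $14$-vertex graph (equivalently, the "coprimeness graph" on the proper divisors of a squarefree number with four prime factors) — drawing it by hand and checking, or invoking Kuratowski after verifying no $K_5$ or $K_{3,3}$ subdivision: the graph is $4$-partite with small parts ($\binom{4}{1}=4$ singletons... actually parts of sizes depending on the $\mathcal A_i$ construction), and one checks directly it is sparse enough. ($\Rightarrow$) Suppose $\mathcal P(G)$ is planar. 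By Proposition~\ref{planarity of prime graph 2}, $|G|$ has at most four prime factors; combined with the hypothesis of this proposition, exactly four. If $G \not\cong \mathbb Z_{pqrs}$, then either some Sylow subgroup is non-cyclic or non-normal, or $G$ is non-abelian; in every such case I claim $G$ has *more* structure than $\mathbb Z_{pqrs}$ that forces a forbidden subdivision. Concretely: (i) if any prime exponent is $\ge 2$, say $a \ge 2$, then $G$ has subgroups of orders $p$ and $p^{2}$, and together with subgroups of orders $q, r, s$ we get a $K_5$ minus the single edge $\{$order-$p$, order-$p^2\}$ on vertices $\{p, p^2, q, r, s\}$, plus — using an extra subgroup of order $qr$ adjacent to the order-$p$, order-$p^2$, and order-$s$ subgroups — a $K_{3,3}$ or $K_5$-subdivision; (ii) if all exponents are $1$ but $G$ is non-abelian or has a non-normal Sylow subgroup, then some Sylow subgroup, say the Sylow $p$-subgroup, has $n_p(G) \ge 2$, giving two subgroups of order $p$ on one side and the subgroups of orders $q, r, s$ (and $qr$, say) on the other, yielding $K_{2,4} \supset$ a $K_{3,3}$-subdivision after routing one more path. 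In each branch the task is to name five or six specific subgroups and check the required adjacencies; the bookkeeping about which subgroups are guaranteed to exist (Cauchy, the $UV$ argument above, and normality forcing products to be subgroups) is what makes the cases uniform.

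\medskip
\noindent\textbf{Main obstacle.} The hard part is the forward direction of (1): ruling out *every* group of order $p^{a}q^{b}r^{c}s^{d}$ other than $\mathbb Z_{pqrs}$ without a full classification of such groups. The right move is not to classify but to prove a clean dichotomy — "$G \cong \mathbb Z_{pqrs}$, or $G$ has either (a) a prime-power subgroup of order $p_i^2$ for some $i$, or (b) at least two subgroups of some prime order $p_i$" — and then show each of (a), (b) produces a Kuratowski subgraph using only the guaranteed cross-prime subgroups of orders $p_ip_j$. Verifying that dichotomy is itself a short Sylow-theory argument (if all Sylows are cyclic of prime order and all normal, the group is cyclic of squarefree order), so the real care goes into the two forbidden-subgraph constructions and the degenerate coincidences among named subgroups.
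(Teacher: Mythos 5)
Your overall architecture -- an explicit plane embedding for $\mathbb Z_{pqrs}$, Kuratowski subgraphs in every other case organized by ``some exponent at least $2$'' versus ``some Sylow subgroup non-unique'', and the closing dichotomy -- is essentially the paper's. But the ``engine'' you build the constructions on is false. The product $UV$ of a Sylow $u$-subgroup and a Sylow $v$-subgroup need not be a subgroup at all (it is one only when $UV=VU$), and replacing it by $\langle U,V\rangle$ does not help, since that can be all of $G$ even when $u^{a}v^{b}<|G|$. Concretely, $A_7$ has order $2^3\cdot 3^2\cdot 5\cdot 7$ but no proper subgroup of order divisible by $35$ (its maximal subgroups are $A_6$, $S_5$, $L_2(7)$ and a group of order $72$), so the claim that a proper subgroup of order divisible by $uv$ exists for every pair of primes $u,v$ dividing $|G|$ fails. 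Even when such a subgroup exists, its order may be divisible by further primes, which destroys exactly the adjacencies you need: your $K_{1,4}$ requires the fifth vertex $H'$ to have order coprime to $s$, and your $K_{2,3}$ requires $H_{pq}$ to have order coprime to $rs$; your existence argument delivers neither. The paper avoids this by producing, in every case other than the cyclic squarefree one, a full $K_{3,3}$ (hence $K_{2,3}$) from three subgroups of $p$-power order or three subgroups of the same prime order, and by checking $K_{2,3}$ and $K_{1,4}$ for $\mathbb Z_{pqrs}$ directly on the $14$-vertex graph.

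Two further points. In branch (ii) you claim two conjugate Sylow $p$-subgroups give a $K_{2,4}$ containing a $K_{3,3}$-subdivision ``after routing one more path''; $K_{2,4}$ is planar, so this cannot work as stated. The repair is that $n_p\equiv 1\ (\mathrm{mod}\ p)$ forces $n_p\ge p+1\ge 3$, so a non-normal Sylow subgroup gives three vertices of the same order at once, and together with subgroups of orders $q$, $r$, $s$ this is a genuine $K_{3,3}$ -- which is what the paper does. Your branch (i) ($K_5$ on the subgroups of orders $p,p^2,q,r,s$ minus one edge, restored by a path through a vertex of order coprime to $p$) is a sound idea, but the existence of that intermediate vertex again needs a case argument (a second subgroup of prime-power order if some Sylow subgroup is non-unique or of order at least $q^2$, or the product of two normal Sylow subgroups of prime order otherwise), not the false lemma. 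Once the two Kuratowski constructions are run through your closing dichotomy rather than through that lemma, the proof closes and coincides with the paper's.
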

\begin{proof}
	Let $|G|=p^\alpha q^\beta r^\gamma s^\delta$.	
	Let $H_1$, $H_2$, $H_3$, $H_4$ be subgroups of $G$ of order 
	$p$, $q$, $r$, $s$ respectively and let $H_5:=\langle a,b\rangle$ be subgroup of $G$, where $a$, $b$ are elements of $G$ of order $q$, $s$ respectively. Then $\mathcal P(G)$ contains $K_{1,4}$ as a subgraph with bipartition $X:=\{H_1\}$ and $Y:=\{H_2, H_3, H_4, H_5\}$. We divide the rest of the proof into two cases: 
	
	\noindent \textbf{Case 1:} Let one of $\alpha$, $\beta$, $\gamma$ or $\delta\geq 3$, without loss of generality we assume that $\alpha\geq 3$. Then $\mathcal P(G)$ contains $K_{3,3}$ as a subgraph with 
	bipartition $X:=\{H_1$, $H_2$, $H_3\}$ and $Y:=\{H_4$, $H_5$, $H_6\}$, where order of $H_1$, $H_2$, $H_3$, $H_4$, $H_5$, $H_6$ are $p$, $p^2$, $p^3$, $q$, $r$, $s$ respectively.
	
	\noindent \textbf{Case 2:} Let each $\alpha$, $\beta$, $\gamma$ and $\delta\leq 2$.
	Suppose the Sylow subgroups of $G$ is not unique, without loss of generality, we assume that the Sylow $p$-subgroup is not unique. Here $G$ has at least three subgroups, say $H_1$, $H_2$, $H_3$ of order $p^\alpha$, and subgroups $H_4$, $H_5$, $H_6$ of order $q$, $r$, $s$ respectively. Then $\mathcal P(G)$ contains $K_{3,3}$ as a subgraph with bipartition 
	$X:=\{H_1$, $H_2$, $H_3\}$ and $Y:=\{H_4$, $H_5$, $H_6\}$. 
	
	Suppose all the Sylow subgroups of $G$ are unique, then $G$ is abelian. Now we consider the following subcases:
	
	\noindent \textbf{Subase 2a:} $G$ is cyclic. If $G\cong \mathbb Z_{pqrs}$, then let $H_i$, $i=1$, 2, $\ldots$, 14 be subgroups of $G$ of orders $p$, $q$, $r$, $s$, $pq$, $pr$, $ps$, $qr$, $qs$, $rs$, $pqr$, $pqs$, $prs$, $qrs$ respectively. Then $\mathcal P(G)$ is planar and the 
	corresponding plane embedding is shown in Figure~\ref{fig: f4}; also $\mathcal P(G)$ contains $K_{2,3}$ and $K_{1,4}$.
	
	\begin{figure}[ ht ]
		\begin{center}
			\includegraphics[scale=1]{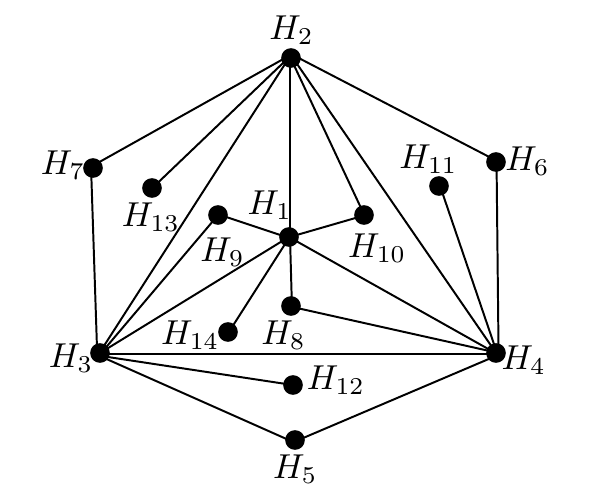}
			\caption{$\mathcal P(\mathbb Z_{pqrs}$)}
			\label{fig: f4}
		\end{center}
	\end{figure}
	
	If $G\cong \mathbb Z_{p^2qrs}$, then $\mathcal P(G)$ contains $K_{3,3}$ as a subgraph with 
	bipartition $X:=\{H_1$, $H_2$, $H_3\}$ and $Y:=\{H_4$, $H_5$, $H_6\}$, where $H_1$, $H_2$, $H_3$, $H_4$, $H_5$, $H_6$ are subgroups of $G$ of order 
	$p$, $p^2$, $pq$, $r$, $s$, $rs$ respectively. If $G\cong \mathbb Z_{p^2q^2rs}$, $\mathbb Z_{p^2q^2r^2s}$ or $\mathbb Z_{p^2q^2r^2s^2}$, then $G$ has 
	$\mathbb Z_{p^2qrs}$ as a subgroup and so $\mathcal P(G)$ contains $K_{3,3}$.
	
	\noindent \textbf{Subcase 2b:} $G$ is non-cyclic abelian. Here $G$ has atleast three subgroups of any one of prime order $p$, $q$, $r$ or $s$, without loss of generality, say $p$. 
	Then $\mathcal P(G)$ contains $K_{3,3}$ as a subgraph with 
	bipartition $X:=\{H_1$, $H_2$, $H_3\}$ and $Y:=\{H_4$, $H_5$, $H_6\}$, where $H_1$, $H_2$, $H_3$, $H_4$, $H_5$, $H_6$ are subgroups of $G$ of order 
	$p$, $p$, $p$, $q$, $r$, $s$ respectively.
	
	Combining all the cases together the proof follows.
\end{proof}

\begin{pro}\label{order prime graph of subgroups of groups 11}
	Let $G$ be a group whose order has three distinct prime factors. Then
	\begin{enumerate}[\normalfont (1)]
		\item $\mathcal P(G)$ is planar if and only if $G$  is isomorphic to either $\mathbb Z_{pqr}$ or $\mathbb Z_{p^2qr}$;
		\item 	$\mathcal P(G)$ contains $K_{1,2}$;
		\item The following are equivalent:
		\begin{enumerate}[\normalfont (a)]
			\item $G\cong \mathbb Z_{pqr}$;
			\item $\mathcal P(G)$ is $K_{1,4}$-free;
\item $\mathcal P(G)$ is $K_{2,3}$-free;
			
			\item $\mathcal P(G)$ is $K_{1,3}$-free;
			\item $\mathcal P(G)$ is $K_{2,2}$-free;
			\item $\mathcal P(G)$ is unicyclic.
		\end{enumerate}
	\end{enumerate}

\end{pro}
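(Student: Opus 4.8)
The plan is to analyze groups $G$ of order $p^\alpha q^\beta r^\gamma$ (three distinct primes) and determine exactly when $\mathcal P(G)$ is planar, always contains $K_{1,2}$, and when the $K_{pqr}$-condition forces the various forbidden-subgraph / unicyclic properties. First I would dispose of part (2): since $|G|$ has three distinct prime divisors, $G$ has subgroups $H_1,H_2,H_3$ of orders $p,q,r$ respectively, and these are mutually adjacent in $\mathcal P(G)$, so in fact $\mathcal P(G)$ contains $K_3\supseteq K_{1,2}$; this is immediate and needs no case analysis. For part (3), the implications $\text{(a)}\Rightarrow$ everything else are the cleanest starting point: when $G\cong\mathbb Z_{pqr}$ the subgroup lattice is completely explicit (one subgroup of each order $d\mid pqr$, eight proper ones), so I would just describe $\mathcal P(\mathbb Z_{pqr})$ directly, observe it is planar, contains exactly one cycle (the triangle on the three prime-order subgroups — note each of $pq,pr,qr$ is adjacent to exactly one prime-order subgroup and each $p,q,r$ has degree $2$), hence unicyclic, and has maximum degree $2$ so it is $K_{1,3}$-free (a fortiori $K_{1,4}$-free), and $K_{2,2}$-free and $K_{2,3}$-free since a $K_{2,2}$ would need a $4$-cycle which the unicyclic structure with girth $3$ forbids.

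The substantive work is the reverse direction: showing that for any group of order with three prime factors other than $\mathbb Z_{pqr}$ (and other than $\mathbb Z_{p^2qr}$ in the planarity statement), $\mathcal P(G)$ fails each property. The strategy here is to exhibit, in every such $G$, a $K_{1,4}$ (for (2)(b)), a $K_{2,3}$ (for (2)(c)), a second cycle or vertex of degree $\ge 3$ (for unicyclicity), etc. — and the key observation making this uniform is: if $\mathcal P(G)$ contains $K_{2,3}$ it contains $K_{1,3}$ and a $4$-cycle (so is not unicyclic) and is non-planar-adjacent in the sense of being "large"; conversely the single-prime-power exponent case is the only small one. Concretely, I would argue: if any Sylow subgroup of $G$ is non-unique, then $G$ has $\ge 3$ (in fact $\ge p+1\ge 3$, or $\ge 4$) conjugate subgroups of that prime order, giving at once a $K_{1,4}$ or $K_{2,3}$ with the Sylow subgroups of the other two primes on the other side; if all Sylow subgroups are unique then $G$ is nilpotent, hence the direct product of its Sylow subgroups, hence (since it has subgroups of orders $pq$, $pr$, $qr$, $pqr$ is forbidden as it would be an isolated vertex... actually $pqr$ being a proper subgroup happens only when exponents are larger) — the cleanest sub-route is to split on the exponent vector $(\alpha,\beta,\gamma)$, the boundary case being $\mathbb Z_{pqr}$ itself and the next one $\mathbb Z_{p^2qr}$, whose $\mathcal P$ I would need to check is planar (embedding) but does contain $K_{1,4}$ and $K_{2,3}$ and is not unicyclic, which is why it appears in (1) but not (3).

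For part (1), the planar classification, the backbone is Kuratowski: I would show every group of order $p^\alpha q^\beta r^\gamma$ other than $\mathbb Z_{pqr}$ and $\mathbb Z_{p^2qr}$ has $\mathcal P(G)\supseteq K_5$ or $\supseteq K_{3,3}$. The $K_5$ route applies when three of the exponents are $\ge\ldots$ no — with only three primes $K_5$ needs either $\alpha\ge 3$ or two exponents $\ge 2$ (then subgroups of orders $p,p^2,p^3$ or $p,p^2,q,q^2$ together with prime-order subgroups build a $K_5$), or a non-unique Sylow subgroup contributing extra mutually-nonadjacent vertices that combine with the cross-prime triangle; more robustly one exhibits $K_{3,3}$. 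The main obstacle, and where I expect to spend the most care, is the bookkeeping for the two "extremal survivors": one must (i) actually produce plane embeddings of $\mathcal P(\mathbb Z_{pqr})$ and $\mathcal P(\mathbb Z_{p^2qr})$ — drawing the $14$-vertex graph for $\mathbb Z_{p^2qr}$ planarly is the fiddly part — and (ii) verify that the "next" candidates, namely $\mathbb Z_{p^3qr}$, $\mathbb Z_{p^2q^2r}$, and every group with a non-normal Sylow subgroup, genuinely contain $K_{3,3}$; the non-abelian cases of order $p^2qr$ and the precise count of conjugate Sylow subgroups (using $n_p\equiv 1$ and $n_p\mid$ the cofactor, so $n_p\ge 3$ whenever $n_p\ne 1$) are the details most prone to slips. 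Assuming the earlier structural results and Kuratowski's theorem, the argument is otherwise a finite, if tedious, enumeration.
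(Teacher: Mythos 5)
Your overall architecture matches the paper's: dispose of part (2) via the triangle on three prime-order subgroups, split the converse directions by exponent vector and by uniqueness of Sylow subgroups (non-unique gives $n_p\ge p+1\ge 3$ conjugates and hence $K_{2,3}$ and $K_{1,4}$ immediately; all unique forces, in the surviving range of exponents, an abelian and then cyclic group), exhibit $K_{3,3}$ or a ($K_5$-)subdivision in every group other than $\mathbb Z_{pqr}$ and $\mathbb Z_{p^2qr}$, and give explicit plane embeddings for those two. That is essentially the paper's proof.

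However, there is a concrete error in the one place you claim is ``clean'': your description of $\mathcal P(\mathbb Z_{pqr})$. There are six nontrivial proper subgroups (not eight), of orders $p,q,r,pq,pr,qr$, and the subgroup of order $p$ is adjacent to \emph{three} vertices, namely those of orders $q$, $r$ and $qr$; likewise for $q$ and $r$. So the graph is a triangle with one pendant vertex attached at each corner, with degree sequence $(3,3,3,1,1,1)$ --- not maximum degree $2$ as you assert. Your deduction ``max degree $2$, hence $K_{1,3}$-free, a fortiori $K_{1,4}$-free'' therefore fails: the vertex of order $p$ together with its three neighbours is a $K_{1,3}$ subgraph, so under the paper's own definition of $H$-free (no subgraph isomorphic to $H$) the implication (a)$\Rightarrow$(d) is not established by your argument (and is in fact problematic as stated; it holds only for \emph{induced} copies of $K_{1,3}$, since the two prime-order neighbours are adjacent to each other). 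The conclusions you need for (b), (e), (c) and (f) do survive --- maximum degree $3$ gives $K_{1,4}$-freeness, and the unique cycle being a triangle rules out $C_4=K_{2,2}$, hence $K_{2,3}$, and gives unicyclicity --- but you must correct the degree computation and give a separate, honest treatment of the $K_{1,3}$ condition (deciding which notion of containment is in force) before the equivalence (a)$\Leftrightarrow$(d) can be claimed.
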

\begin{proof}
	Let $|G|=p^\alpha q^\beta r^\gamma$. We need to consider the following cases.
	
	\noindent \textbf{Case 1:} Let one of $\alpha$, $\beta$ or $\gamma\geq 3$, without loss of generality we assume that $\alpha\geq 3$. Then $\mathcal P(G)$ contains $K_{3,3}$ as a subgraph with 
	bipartition $X:=\{H_1$, $H_2$, $H_3\}$ and $Y:=\{H_4$, $H_5$, $H_6\}$, where $H_1$, $H_2$, $H_3$, $H_4$, $H_5$ are subgroups of $G$ of order 
	$p$, $p^2$, $p^3$, $q$, $r$ respectively and $H_6:=\langle a,b\rangle$, $a$, $b$ are elements of $G$ of order $q$, $r$ respectively. Moreover, $\mathcal P(G)$ contains $K_{1,4}$ as a subgraph with 
	bipartition $X:=\{H_1$, $H_2$, $H_3$, $K_2\}$ and $Y:=\{H_4\}$.
	
	\noindent \textbf{Case 2:} Let each $\alpha$, $\beta$ and $\gamma\leq 2$.
	Suppose a Sylow subgroup of $G$ is not unique, without loss of generality, we assume that Sylow $p$-subgroup is not unique. Then $G$ has at least three subgroups, say $H_1$, $H_2$, $H_3$ of order $p^\alpha$, and subgroups
	$H_4$, $H_5$ of order $q$, $r$, respectively, and  $H_6:=\langle a,b\rangle$, $a$, $b$ are elements of $G$ of order $q$, $r$ respectively. Then $\mathcal P(G)$ contains 
	$K_{3,3}$ as a subgraph with bipartition $X:=\{H_1$, $H_2$, $H_3\}$ and $Y:=\{H_4$, $H_5$, $H_6\}$. Also $\mathcal P(G)$ contains $K_{1,4}$ as a subgraph with 
	bipartition $X:=\{H_1$, $H_2$, $H_3$, $K_2\}$ and $Y:=\{H_4\}$.
	
	Now assume that all the Sylow subgroups of $G$ are unique. Then $G$ is abelian.
	\begin{enumerate}[{\normalfont (a)}]
    \item [(a)] Let $G$ be cyclic. If $G\cong \mathbb Z_{pqr}$, then let $H_i$, $i=1$, 2, $\ldots$, 6 be subgroups of $G$ of orders $p$, $q$, $r$, $pq$, $pr$, $qr$ respectively. Then $\mathcal P(G)$ is planar and the 
	corresponding plane embedding is shown in Figure~\ref{fig: f1}. 
	\begin{figure}[ht]
	\begin{center}
	\includegraphics[scale =0.6]{}
	\caption{$\mathcal P(\mathbb Z_{pqr})$}
	\label{fig: f1}
	\end{center}
	\end{figure}
	Also $\mathcal P(G)$ is unicyclic; it does not contain  $K_{2,2}$, $K_{1,3}$ as subgraphs; but it contains $K_{1,2}$ as a subgraph. If $G\cong \mathbb Z_{p^2qr}$, then let $H_i$, $i=1$, 2, $\ldots$, 10 be subgroups of $G$ of orders $p$, $p^2$, $q$, $r$, $pq$, $pr$, $qr$, $p^2q$, $p^2r$, $pqr$ respectively. Then $\mathcal P(G)$ is planar and the 
	corresponding plane embedding is shown in Figure~\ref{fig:f5}.
	\begin{figure}[ ht ]
		\begin{center}
			\begin{minipage}{.4\textwidth}
				\begin{center}
					\includegraphics[scale =0.8]{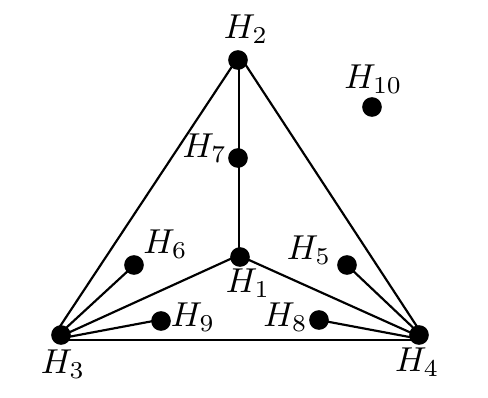}
					\caption{$\mathcal P(\mathbb Z_{p^2qr})$}
					\label{fig:f5}
				\end{center}
			\end{minipage}
			\begin{minipage}{.4\textwidth}
				\begin{center}
					\includegraphics[scale =0.8]{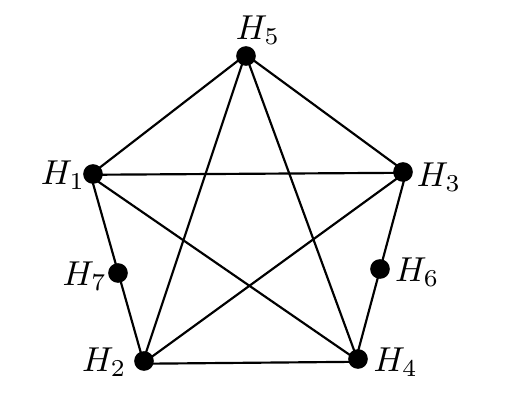}
					\caption{A subdivision of $K_5$}
					\label{fig:f6}
				\end{center}
			\end{minipage}
		\end{center}
	\end{figure}
	% \begin{figure}[ ht ]
	%\begin{center}
	% \includegraphics[scale=0.6]{ci1.pdf}
	%\caption{$\mathcal P(\mathbb Z_{pqr}$)}
	%\label{fig: f2}
	%\end{center}
	%\end{figure}
	%
	% \begin{figure}[ ht ]
	%\begin{center}
	% \includegraphics[scale=0.6]{ci11.pdf}
	%\caption{$\mathcal P(\mathbb Z_{p^2qr}$)}
	%\label{fig:f3}
	%\end{center}
	%\end{figure}	
	Also $\mathcal P(G)$ is not unicyclic; it contains $K_{2,3}$ as a subgraph and does not contain $K_{1,4}$. If $G\cong \mathbb Z_{p^2q^2r}$ or $\mathbb Z_{p^2q^2r^2}$, then 
	$\mathcal P(G)$ contains a subdivision of $K_5$ as shown in Figure~\ref{fig:f6} with vertices $H_i$, $i=1$, $\ldots$, 7 of order 
	$p$, $p^2$, $q$, $q^2$, $r$, $pr$, $qr$ respectively. Also $\mathcal P(G)$ contains $K_{1,4}$ as a subgraph with 
	bipartition $X:=\{H_1$, $H_2$, $H_3$, $H_4\}$ and $Y:=\{H_5\}$; it contains $K_{2,3}$ as a subgraph with bipartition $X:=\{H_1$, $H_2\}$ and $Y:=\{H_3, H_4, H_5\}$.
%	\begin{figure}[ ht ]
%		\begin{center}
%			\includegraphics[scale=0.9]{subci.pdf}
%			\caption{Subdivision of $K_5$}
%			\label{fig:f5}
%		\end{center}
%	\end{figure}
	
	\item [(b)] Let $G$ be non-cyclic abelian. Here $G$ has atleast three subgroups of any one of prime order $p$, $q$, $r$ or $s$, let it be $p$. 
	Then $\mathcal P(G)$ contains $K_{3,3}$ as a subgraph with 
	bipartition $X:=\{H_1$, $H_2$, $H_3\}$ and $Y:=\{H_4$, $H_5$, $H_6\}$, where $H_1$, $H_2$, $H_3$, $H_4$, $H_5$, $H_6$ are subgroups of $G$ of order 
	$p$, $p$, $p$, $q$, $r$, $s$ respectively. Also $\mathcal P(G)$ contains $K_{1,4}$ as a subgraph with 
	bipartition $X:=\{H_1$, $H_2$, $H_3$, $H_5\}$ and $Y:=\{H_4\}$.
\end{enumerate}	
	The proof follows by combining all the above cases together.
\end{proof}

\begin{pro}\label{planarity of prime graph 4}
	Let $G$ be a group of order $p^\alpha q^\beta$, $\alpha$, $\beta\geq 2$, $\alpha+\beta\geq 6$. Then 
	\begin{enumerate}[\normalfont (1)]
		\item $\mathcal P(G)$ is planar if and only if $|G|=p^\alpha q^2$ with $G$ has  unique cyclic Sylow  $q$-subgroup;
		\item $\mathcal P(G)$ is $K_{1,4}$-free if and only if $G\cong \mathbb Z_{p^3q^3}$;
	%\item $\mathcal P(G)$ is unicyclic if and only if $G\cong \mathbb Z_{p^2q^2}$ or $\mathbb Z_{p^2}\rtimes \mathbb Z_{q^2}$.
	\item $\mathcal P(G)$ constains  $K_{2,3}$.
	\end{enumerate}
	
\end{pro}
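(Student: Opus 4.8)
The plan is to split according to the number of Sylow subgroups of each prime, since non-uniqueness immediately forces a $K_{3,3}$ (hence non-planarity) and an abundance of subgroups of equal prime-power order (hence a $K_{1,4}$). So first I would dispose of the case where some Sylow subgroup is non-unique: if, say, the Sylow $p$-subgroup is not unique then (as in the proofs of Propositions~\ref{planarity of prime graph 3} and \ref{order prime graph of subgroups of groups 11}) $G$ has at least three subgroups $H_1,H_2,H_3$ of order $p^\alpha$, together with subgroups $H_4$ of order $q$, $H_5$ of order $q^2$, and one more $q$-subgroup $H_6$; since $\alpha,\beta\geq 2$ these all have order coprime to $p^\alpha$, giving $K_{3,3}$ with parts $\{H_1,H_2,H_3\}$, $\{H_4,H_5,H_6\}$, and also $K_{1,4}$ centered at $H_4$ with leaves $H_1,H_2,H_3,H_5$ — so in this case $\mathcal P(G)$ is neither planar nor $K_{1,4}$-free, and $K_{2,3}\subseteq K_{3,3}$ is present. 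Symmetrically if the Sylow $q$-subgroup is non-unique. Hence I may assume all Sylow subgroups are unique, so $G=P\times Q$ with $P$ the Sylow $p$-subgroup and $Q$ the Sylow $q$-subgroup, and $|P|=p^\alpha$, $|Q|=q^\beta$, $\alpha,\beta\geq 2$, $\alpha+\beta\geq 6$.

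Next, within $G=P\times Q$, I would produce $K_{2,3}$ unconditionally, proving part (3): take $H_1$ of order $p$, $H_2$ of order $p^2$ (available since $\alpha\geq 2$), and $H_3$ of order $q$, $H_4$ of order $q^2$, $H_5$ of order $q$ inside $Q$ — for part (3) it suffices that $Q$ has three proper subgroups whose orders are powers of $q$ and at most one distinct from the others only in order; since $\beta\geq 2$, $Q$ has a subgroup of order $q$ and one of order $q^2$, and because $\alpha+\beta\geq 6$ forces $\beta\geq 2$ with room, one checks $Q$ always supplies the needed third $q$-subgroup (e.g. a subgroup of order $q^3$ when $\beta\geq 3$, or a second subgroup of order $q$ when $Q$ is non-cyclic of order $q^2$, while if $\beta=2$ and $Q$ cyclic then $\alpha\geq 4$ and one instead uses a third $p$-subgroup on the other side). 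I would state this carefully so that the two small parts $\{p\text{-subgroups}\}$ of size $2$ and $\{q\text{-subgroups}\}$ of size $3$ (or vice versa) always exist; the key input is simply that both $P$ and $Q$ have at least two proper subgroups and their total subgroup count is large because $\alpha+\beta\geq 6$.

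For parts (1) and (2) I would then analyze $G=P\times Q$ by the structure of $P$ and $Q$. For part (2) ($K_{1,4}$-free): a subgroup of order $q$ in $Q$ is adjacent in $\mathcal P(G)$ to every subgroup of $G$ whose order is a power of $p$, i.e. to every proper subgroup of $P$ together with $P$ itself; for this to have degree $\leq 3$, $P$ must have exactly three subgroups of $p$-power order other than itself — and among $p$-groups only $\mathbb Z_{p^3}$ has exactly three proper nontrivial subgroups of prime-power order (namely orders $p,p^2$) plus... — here I must be careful: $\mathbb Z_{p^3}$ has proper subgroups of orders $p,p^2$, only two of them, so the $q$-subgroup has $p$-neighbors $\{$order $p$, order $p^2\}$, degree $2$; to get a $K_{1,4}$ one needs four, so I would show that $K_{1,4}$ appears unless both $P\cong\mathbb Z_{p^3}$ and $Q\cong\mathbb Z_{q^3}$, using that a non-cyclic $p$-group of order $\geq p^3$, or a cyclic one of order $\geq p^4$, already gives enough $p$-power subgroups to form $K_{1,4}$ at a $q$-subgroup, and that $\alpha+\beta\geq 6$ rules out $(\alpha,\beta)=(2,\le3)$ etc. except $(3,3)$; conversely for $\mathbb Z_{p^3q^3}$ I would exhibit the graph (a join-type structure on the $p$-power subgroups vs. $q$-power subgroups, with the mixed-order subgroups isolated-ish) and check directly that every vertex has degree $\leq 3$, giving $K_{1,4}$-freeness. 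For part (1) ($\mathcal P(G)$ planar), since planarity implies $K_{3,3}$- and $K_5$-freeness and $K_{1,4}$-freeness is weaker than planarity's consequences only partially, I would show: if $\beta\geq 3$ then picking $p,p^2$ and $q,q^2,q^3$ subgroups plus a subgroup of order $pq$ yields a $K_{3,3}$ or a subdivision of $K_5$ unless $Q$ is cyclic and $\alpha$ is the large one — more precisely I would reduce to $|G|=p^\alpha q^2$ with $Q$ cyclic (so $Q$ has the single chain of subgroups), verify $\mathbb Z_{p^3q^3}$ and all $p^\alpha q^\beta$ with $\beta\geq 3$ or $Q$ non-cyclic of order $q^2$ contain $K_{3,3}$ or a $K_5$-subdivision, and then for $|G|=p^\alpha q^2$ with unique cyclic Sylow $q$-subgroup give the explicit plane embedding (the $q$ and $q^2$ subgroups together with the chain/lattice of $p$-power subgroups, drawn as in the figures of the earlier propositions).

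The main obstacle I anticipate is the positive direction of (1): producing an explicit planar embedding of $\mathcal P(G)$ for $|G|=p^\alpha q^2$ with $G$ having a unique cyclic Sylow $q$-subgroup, for arbitrarily large $\alpha$. The point is that in this case the only subgroups with order coprime to $q$ are the $p$-power subgroups $P_1\subset P_2\subset\cdots$ (there may be many of these, e.g. if $P$ is non-cyclic), while the only subgroups with order coprime to $p$ are exactly two, namely the order-$q$ subgroup $Q_1$ and $Q_2=Q$ of order $q^2$; every other proper subgroup has order divisible by both $p$ and $q$ and is therefore an isolated vertex. Thus $\mathcal P(G)$ is (a bunch of isolated vertices) $\cup$ (a bipartite graph with one side of size $2$ and the other side the set of $p$-power subgroups). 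A bipartite graph $K_{2,m}$ is planar for every $m$, so the whole thing is planar — this is the crux, and once phrased this way the embedding is routine; the care needed is only in verifying that when a Sylow $q$-subgroup of order $q^2$ is \emph{non-cyclic} one instead gets three subgroups of order $q$ and hence a $K_{3,3}$ with the $p,p^2$ subgroups, breaking planarity, which is why the "cyclic" hypothesis is exactly right.
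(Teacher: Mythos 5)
Your opening reduction is false, and it contradicts your own (correct) analysis at the end. You claim that if some Sylow subgroup of $G$ is non-unique then $\mathcal P(G)$ automatically contains $K_{3,3}$, by taking three Sylow $p$-subgroups $H_1,H_2,H_3$ of order $p^\alpha$ together with subgroups $H_4, H_5$ of orders $q,q^2$ ``and one more $q$-subgroup $H_6$.'' That third $q$-power subgroup need not exist: if $\beta=2$ and the Sylow $q$-subgroup is unique and cyclic, the only proper subgroups of $G$ with order a power of $q$ are the two subgroups of $\mathbb Z_{q^2}$, so the bipartite graph between $p$-power and $q$-power subgroups is a $K_{2,m}$ and no $K_{3,3}$ arises --- no matter how many Sylow $p$-subgroups there are. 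Such groups exist (e.g.\ $\mathbb Z_{q^2}\rtimes\mathbb Z_{p^4}$ with a faithful-enough action, which has $n_p>1$) and, as the statement and your own final paragraph correctly assert, they are planar; your step 1 would declare them non-planar. The correct case split, as in the paper, is on $\beta$ (WLOG $\alpha\ge\beta$): if $\beta\ge 3$ one gets $K_{3,3}$ from the orders $p,p^2,p^3$ versus $q,q^2,q^3$; if $\beta=2$ one cases on whether the Sylow $q$-subgroup is unique and cyclic (two $q$-power vertices, hence $K_{2,m}\cup\overline{K}_r$, planar) or not (three $q$-power vertices, hence $K_{3,3}$ against $p,p^2,p^3$, using $\alpha\ge 4$). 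Uniqueness of the Sylow $p$-subgroup is irrelevant to planarity here.

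A second, smaller error in the same step: your claimed $K_{1,4}$ centered at $H_4$ (order $q$) has $H_5$ (order $q^2$) as a leaf, but $\gcd(q,q^2)\ne 1$, so $H_4$ and $H_5$ are not adjacent. This is repairable (use a fourth $p$-power subgroup, e.g.\ one of order $p$, as the leaf), and indeed for part (2) the conclusion survives: whenever a Sylow subgroup is non-unique or non-cyclic, or some exponent is at least $4$, the vertex of order $q$ (resp.\ $p$) sees at least four subgroups of $p$-power (resp.\ $q$-power) order, giving $K_{1,4}$; the constraint $\alpha+\beta\ge 6$ then isolates $\mathbb Z_{p^3q^3}$ exactly as you and the paper conclude. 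Your treatment of part (3) and of the positive direction of (1) (the $K_{2,m}$-plus-isolated-vertices description) matches the paper's argument; the proposal fails only because the initial ``non-unique Sylow $\Rightarrow K_{3,3}$'' lemma is wrong.
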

\begin{proof}
	Proof is divided into two cases:
	
	\noindent \textbf{Case 1:} $\alpha+\beta\geq 6$ and $\alpha$, $\beta\geq 3$.
	Let $H_1$, $H_2$, $H_3$, $H_4$, $H_5$, $H_6$ be subgroups of $G$ of order $p$, $p^2$, $p^3$, $q$, $q^2$, $q^3$ respectively. Then $\mathcal P(G)$ 
	contains $K_{3,3}$ as a subgraph with bipartition $X:=\{H_1, H_2, H_3\}$ and $Y:=\{H_4, H_5, H_6\}$.
	
	If $\alpha\geq 4$, then $G$ has subgroups of order $p$, $p^2$, $p^3$, $p^4$, $q$, let them be $H_1$, $H_2$, $H_3$, $H_4$, $H_5$ respectively. Then $\mathcal P(G)$ contains $K_{1,4}$ as a subgraph with bipartition $X:=\{H_1, H_2, H_3, H_4\}$ and $Y:=\{H_5\}$.
	
	Let $\alpha=3$. Suppose Sylow $p$-subgroup and Sylow $q$-subgroup of $G$ are cyclic and are unique, then $G$ is cyclic and so $\mathcal P(G)$ is bipartite with one partition contains all subgroups whose orders has $p$ as a divisor and another partition contains remaining proper subgroups of $G$. Thus $\mathcal P(G)\cong K_{3,3}\cup \overline{K}_{10}$. It follows that $\mathcal P(G)$ is $K_{1,4}$-free. 
	
	If Sylow $p$-subgroup or Sylow $q$-subgroup of $G$ is not cyclic, without loss of generality, we assume that Sylow $p$-subgroup is not cyclic, then $G$ has subgroups of order $p$, $p^2$, $p^3$, $p^i$, $q$, for some $i\in\{1, 2\}$, let them be $H_1$, $H_2$, $H_3$, $H_4$, $H_5$ respectively. Then $\mathcal P(G)$ contains $K_{1,4}$ as a subgraph with bipartition $X:=\{H_1, H_2, H_3, H_4\}$ and $Y:=\{H_5\}$.
	
	If Sylow $p$-subgroup or Sylow $q$-subgroup is not unique, without loss of generality, we assume that Sylow $p$-subgroup is not unique, then $G$ has subgroups of order $p$, $p^2$, $p^3$, $p^3$, $q$, let them be $H_1$, $H_2$, $H_3$, $H_4$, $H_5$ respectively. Then $\mathcal P(G)$ contains $K_{1,4}$ as a subgraph with bipartition $X:=\{H_1, H_2, H_3, H_4\}$ and $Y:=\{H_5\}$. 
	
	\noindent \textbf{Case 2}: $\alpha+\beta\geq 6$, $\beta=2$. 
	Let $H_1$, $H_2$, $H_3$, $H_4$, $H_5$, $H_6$ be subgroups of $G$ of order $p$, $p^2$, $p^3$, $p^4$, $q^2$, $q$ respectively. Then $\mathcal P(G)$ 
	contains $K_{1,4}$ as a subgraph with bipartition $X:=\{H_1, H_2, H_3, H_4\}$ and $Y:=\{H_5\}$; also $\mathcal P(G)$ contains $K_{2,3}$ as a subgraph with bipartition $X:=\{H_1, H_2, H_3\}$ and $Y:=\{H_5, H_6\}$. 
	
	If Sylow $q$-subgroup of $G$ is not unique, then $G$ has subgroups of order $p$, $p^2$, $p^3$, $q^2$, $q^2$, $q$, let them be $H_1$, $H_2$, $H_3$, $H_4$, $H_5$, $H_6$ respectively. Then $\mathcal P(G)$ contains $K_{3,3}$ as a subgraph with bipartition $X:=\{H_1, H_2, H_3\}$ and $Y:=\{H_4, H_5, H_6\}$.
	
	If Sylow $q$-subgroup of $G$ is unique and is isomorphic to $\mathbb Z_q\times \mathbb Z_q$, then $G$ has subgroups of order $p$, $p^2$, $p^3$, $q^2$, $q$, $q$, let them be $H_1$, $H_2$, $H_3$, $H_4$, $H_5$, $H_6$ respectively. Then $\mathcal P(G)$ contains $K_{3,3}$ as a subgraph with bipartition $X:=\{H_1, H_2, H_3\}$ and $Y:=\{H_4, H_5, H_6\}$.
	
	If Sylow $q$-subgroup of $G$ is unique and cyclic, then $G$ has an unique subgroups of order $q$, $q^2$, let them be $H_1$, $H_2$ respectively. Then $\mathcal P(G)$ is planar, since $\mathcal P(G)$ is bipartite with one partition contains all the subgroups whose order has $p$ as a divisor and another partition contains $H_1$, $H_2$.
	
	The proof follows by combining the above cases together.
\end{proof}

\begin{pro}\label{planarity of prime graph 5}
	Let $G$ be a group of order $p^3q^2$. Then 
	\begin{enumerate} [\normalfont (1)]
		\item $\mathcal P(G)$ is planar if and only if $G$ has a unique cyclic Sylow $q$-subgroup;
		\item $\mathcal P(G)$ is $K_{1,4}$-free if and only if $G\cong \mathbb Z_{p^3q^2}$.
		\item $\mathcal P(G)$ constains  $K_{2,3}$.
	\end{enumerate}
	
\end{pro}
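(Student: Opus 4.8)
The plan is to handle the three parts of Proposition~\ref{planarity of prime graph 5} by first enumerating the relevant structural possibilities for a group $G$ of order $p^3q^2$ according to its Sylow subgroups, and then analysing $\mathcal P(G)$ in each case. The key dichotomy is whether the Sylow $q$-subgroup of $G$ is unique and cyclic, or not. For part (3), I would first observe that $G$ always has subgroups of order $p$, $p^2$, $p^3$, and (by Cauchy/Sylow) subgroups of order $q$ and $q^2$; picking $H_1$, $H_2$, $H_3$ of orders $p$, $p^2$, $p^3$ and $H_4$, $H_5$ of orders $q$, $q^2$, the set $\{H_1,H_2,H_3\}\cup\{H_4,H_5\}$ already induces $K_{2,3}$ in $\mathcal P(G)$ since all $p$-power orders are coprime to all $q$-power orders. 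This needs only the existence of a chain of $p$-subgroups of orders $p,p^2,p^3$ (guaranteed since a $p$-group of order $p^3$ has subgroups of every intermediate order) and the existence of subgroups of orders $q$ and $q^2$ (Sylow gives a subgroup of order $q^2$, which being a $q$-group has a subgroup of order $q$), so part (3) is immediate and unconditional.

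For the ``only if'' directions of (1) and (2), I would show that whenever the Sylow $q$-subgroup is \emph{not} unique, or is unique but noncyclic (i.e.\ $\cong \mathbb Z_q\times\mathbb Z_q$), $\mathcal P(G)$ contains $K_{3,3}$, hence is nonplanar by Kuratowski, and also contains $K_{1,4}$. If $n_q(G)>1$, then $G$ has at least two subgroups of order $q^2$ (or at least, enough subgroups of $q$-power order) to form, together with $H_1,H_2,H_3$ of orders $p,p^2,p^3$, a $K_{3,3}$ with parts $\{H_1,H_2,H_3\}$ and three $q$-power-order subgroups; I should be slightly careful here, since $n_q$ could be $p$ or $p^2$ or similar, but in every such case one gets at least three subgroups of $q$-power order (two Sylow $q$-subgroups plus a subgroup of order $q$ inside one of them, noting that distinct Sylow $q$-subgroups of order $q^2$ may or may not share their order-$q$ subgroup — if they intersect trivially we get many order-$q$ subgroups; if $q^2$-subgroups all contain a common $\mathbb Z_q$, we still have $\geq 2$ order-$q^2$ subgroups plus that $\mathbb Z_q$, giving three). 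Similarly, if the Sylow $q$-subgroup is unique but $\cong\mathbb Z_q\times\mathbb Z_q$, it contains $q+1\geq 3$ subgroups of order $q$, again yielding $K_{3,3}$ against $\{H_1,H_2,H_3\}$. For $K_{1,4}$: in these same cases $\mathcal P(G)$ has a vertex of order $p$ adjacent to $H_2$ (order $p^2$), $H_3$ (order $p^3$), and at least two distinct $q$-power subgroups, giving $K_{1,4}$; and for part (2) I also need to rule out $\mathbb Z_{p^3q^2}$'s proper competitors — but if $G$ has unique cyclic Sylow subgroups for \emph{both} primes then $G\cong\mathbb Z_{p^3q^2}$, while if the Sylow $p$-subgroup is noncyclic or non-unique one still finds four subgroups of $p$-power order (orders $p,p^2,p^3$ plus one more of order $p$ or $p^3$) all adjacent to a fixed order-$q$ subgroup, producing $K_{1,4}$.

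For the ``if'' directions: when the Sylow $q$-subgroup is unique and cyclic of order $q^2$, call it $Q$ with its unique subgroup $Q_0$ of order $q$; then the only proper subgroups whose order is divisible by $q$ and coprime to nothing of $p$-power order are exactly $Q$ and $Q_0$ (any larger subgroup containing $Q$ would have order divisible by both $p$ and $q$, hence be adjacent to nothing). Thus the bipartition of $\mathcal P(G)$ (from Corollary~\ref{c1}, $\pi(G)=2$) has one part equal to $\{$all proper subgroups with order divisible by $p\}$ and the other part $\subseteq\{Q,Q_0\}$ (plus possibly isolated vertices of mixed order, which contribute nothing to planarity). Hence every vertex in the second part has degree at most equal to the number of $p$-subgroups, but more importantly each of $Q$ and $Q_0$ is adjacent \emph{only} to $p$-power-order subgroups and to each other is non-adjacent, so the only edges of $\mathcal P(G)$ emanate from the two vertices $Q, Q_0$; such a graph (two vertices joined to a common set, with everything else isolated) is a subgraph of $K_{2,m}$, which is planar. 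For part (2), $\mathcal P(\mathbb Z_{p^3q^2})$ has $q$-side $\{Q,Q_0\}$ and $p$-side the three subgroups of orders $p,p^2,p^3$, giving exactly $K_{2,3}$ (plus isolated vertices $Q$... wait, no — $\mathbb Z_{p^3q^2}$ has no isolated mixed-order proper subgroups relevant here) so it is $K_{2,3}$, which is $K_{1,4}$-free, confirming (2); whereas any other group of order $p^3q^2$ was shown above to contain $K_{1,4}$.

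The main obstacle I anticipate is the careful bookkeeping in the ``only if'' direction when $n_q(G)>1$: I must verify that one genuinely always extracts three pairwise ``$q$-side'' subgroups (and four, for the $K_{1,4}$ claim about the $q$-part being unnecessary), covering the sub-possibilities $n_q\in\{p,p^2\}$ and Sylow $q$-subgroups of order $q^2$ being $\mathbb Z_{q^2}$ versus $\mathbb Z_q\times\mathbb Z_q$ and intersecting trivially or in a common $\mathbb Z_q$; a clean uniform statement is that a group of order $p^3q^2$ that is not $\cong\mathbb Z_{p^3q^2}$ and whose Sylow $q$-subgroup is not unique-and-cyclic has at least three proper subgroups of $q$-power order, which I would prove by a short case check using Sylow's theorems and the subgroup structure of groups of order $q^2$.
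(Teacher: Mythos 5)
Your overall strategy is the same as the paper's: part (3) from the chain $p,p^2,p^3$ against $q,q^2$; nonplanarity via $K_{3,3}$ whenever the Sylow $q$-subgroup fails to be unique and cyclic; planarity in the remaining case because the $q$-side of the bipartition has only the two vertices $Q$ and $Q_0$, so $\mathcal P(G)$ sits inside $K_{2,m}$ plus isolated vertices; and $K_{1,4}$-freeness only for the cyclic group. The case split on Sylow subgroups and the specific subgraphs exhibited match the paper's proof almost exactly.

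There is, however, one concrete false step. In your $K_{1,4}$ argument for the subcase where the Sylow $q$-part is not unique-and-cyclic, you write that ``$\mathcal P(G)$ has a vertex of order $p$ adjacent to $H_2$ (order $p^2$), $H_3$ (order $p^3$), and at least two distinct $q$-power subgroups.'' In the coprime graph a subgroup of order $p$ is \emph{not} adjacent to one of order $p^2$ or $p^3$ (their orders share the factor $p$), so this star does not exist. The claim you actually need in that subcase --- where the Sylow $p$-subgroup may be unique and cyclic, so only three $p$-power vertices are available --- is that $G$ has at least \emph{four} proper subgroups of $q$-power order, all of which are then joined to a single fixed $p$-power subgroup. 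This is true and easy: if the Sylow $q$-subgroup is unique but isomorphic to $\mathbb Z_q\times\mathbb Z_q$ it contributes $q+1\geq 3$ subgroups of order $q$ plus itself; if $n_q>1$ then $n_q\equiv 1 \pmod q$ forces $n_q\geq q+1\geq 3$ Sylow subgroups of order $q^2$, plus at least one subgroup of order $q$. You gesture at exactly this count in your final paragraph (``and four, for the $K_{1,4}$ claim''), so the repair is within the scope of what you outlined, but as written the displayed $K_{1,4}$ is not a subgraph of $\mathcal P(G)$. A second, cosmetic slip: $\mathcal P(\mathbb Z_{p^3q^2})$ is not ``exactly $K_{2,3}$'' --- the five proper subgroups of mixed order ($pq$, $p^2q$, $p^3q$, $pq^2$, $p^2q^2$) are isolated vertices --- but this does not affect $K_{1,4}$-freeness.
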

\begin{proof}
	Let $H_1$, $H_2$, $H_3$, $H_4$, $H_5$ be subgroups of $G$ of order $p$, $p^2$, $p^3$, $q$, $q^2$ respectively. Here $\mathcal P(G)$ contains $K_{3,2}$ as a subgraph with bipartition $X:=\{H_1, H_2, H_3\}$ and $Y:=\{H_4, H_5\}$. 
	
	Suppose Sylow $p$-subgroup, and Sylow $q$-subgroup of $G$ are cyclic, and are unique, then $G$ is cyclic and so $\mathcal P(G)$ is bipartite with one partition contains all subgroups whose order has $p$ as a divisor and another partition contains remaining subgroups of $G$. Thus $\mathcal P(G)\cong K_{2,3}\cup \overline{K}_7$. It follows that $\mathcal P(G)$ is $K_{1,4}$-free. 
	
	If Sylow $p$-subgroup of $G$ is not unique, then $G$ has subgroups of order $p^3$, $p^3$, $p^2$, $p$, $q^2$, say $H_1$, $H_2$, $H_3$, $H_4$, $H_5$ respectively. Then $\mathcal P(G)$ contains $K_{1,4}$ as a subgraph with bipartition $X:=\{H_1, H_2, H_3, H_4\}$ and $Y:=\{H_5\}$. 
	
	If Sylow $p$-subgroup of $G$ is not cyclic, then $G$ has subgroups of order $p^3$, $p^i$, $p^2$, $p$, $q^2$, for some $i\in\{1, 2\}$, say $H_1$, $H_2$, $H_3$, $H_4$, $H_5$ respectively. Then $\mathcal P(G)$ contains $K_{1,4}$ as a subgraph with bipartition $X:=\{H_1, H_2, H_3, H_4\}$ and $Y:=\{H_5\}$. 
	
	Suppose Sylow $q$-subgroup of $G$ is isomorphic to $\mathbb Z_q\times \mathbb Z_q$, then $G$ has subgroups of order $p^3$, $p^2$, $p$, $q$, $q$, $q$, $q^2$, say $H_1$, $H_2$, $H_3$, $H_4$, $H_5$, $H_6$, $H_7$ respectively. Then  $\mathcal P(G)$ contains $K_{3,3}$ as a subgraph with bipartition $X:=\{H_1, H_2, H_3\}$ and $Y:=\{H_4, H_5, H_6\}$ and $\mathcal P(G)$ contains $K_{1,4}$ as a subgraph with bipartition $X:=\{H_4, H_5, H_6, H_7\}$ and $Y:=\{H_1\}$. 
	
	Suppose Sylow $q$-subgroup of $G$ is not unique, then $G$ has subgroups of order $p^3$, $p^2$, $p$, $q$, $q^2$, $q^2$, $q^2$, say $H_1$, $H_2$, $H_3$, $H_4$, $H_5$, $H_6$, $H_7$ respectively. Then $\mathcal P(G)$ contains $K_{3,3}$ as a subgraph with bipartition $X:=\{H_1, H_2, H_3\}$ and $Y:=\{H_4, H_5, H_6\}$ and it contains $K_{1,4}$ as a subgraph with bipartition $X:=\{H_4, H_5, H_6, H_7\}$ and $Y:=\{H_1\}$. 
	
	If Sylow $q$-subgroup of $G$ is unique and it is cyclic, then $G$ has an unique subgroup of order $q$, $q^2$, let them be $H_1$, $H_2$ respectively. Then $\mathcal P(G)$ is planar, since $\mathcal P(G)$ is bipartite with one partition contains all the subgroups whose order has $p$ as a divisor and another partition contains $H_1$, $H_2$. It follows that $\mathcal P(G)$ is planar if and only if $G$ has a unique cyclic Sylow $q$-subgroup. In \cite{tripp}  Myron Owen Tripp showed that up to isomophism, there are 15 such groups of order  $p^3q^2$ exist. 
\end{proof}

\begin{pro}\label{planarity of prime graph 6}
Let $G$ be a group of order $p^2q^2$. Then 
\begin{enumerate}[\normalfont (1)]
\item $\mathcal P(G)$ is planar if and only if $G$ is isomorphic to one of $\mathbb Z_{p^2}\rtimes \mathbb Z_{q^2}$, $\mathbb Z_{p^2}\rtimes (\mathbb Z_q\times \mathbb Z_q)$, $\mathbb Z_9\rtimes \mathbb Z_4$ or $D_{18}$;

\item $\mathcal P(G)$ contains $K_{2,2}$;
\item The following are equivalent:
\begin{enumerate}[\normalfont (a)]
\item $G\cong \mathbb Z_{p^2q^2}$;
\item $\mathcal P(G)$ is $K_{1,4}$-free;
\item $\mathcal P(G)$ is $K_{2,3}$-free;
\item $\mathcal P(G)$ is $K_{1,3}$-free;
\item $\mathcal P(G)$ is unicyclic.
\end{enumerate} 
\end{enumerate} 

\end{pro}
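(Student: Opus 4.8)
The plan is to determine $\mathcal P(G)$ completely from the Sylow data of $G$. Write $|G|=p^2q^2$. Every proper subgroup of $G$ has order in $\{p,p^2,q,q^2,pq,pq^2,p^2q\}$, and since two vertices are adjacent exactly when their orders are coprime, each subgroup whose order is divisible by both $p$ and $q$ is an isolated vertex, while the remaining subgroups split into the class $\mathcal B_p$ of subgroups of order $p$ or $p^2$ and the class $\mathcal B_q$ of subgroups of order $q$ or $q^2$, with every member of $\mathcal B_p$ adjacent to every member of $\mathcal B_q$ and no edges inside either class. Hence $\mathcal P(G)\cong K_{m,n}\cup\overline K_r$, where $m=|\mathcal B_p|$, $n=|\mathcal B_q|$, and $r$ is the number of proper subgroups of order divisible by $pq$. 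Because $G$ has a subgroup of order $p^2$ together with a subgroup of order $p$ inside it, $m\geq 2$; likewise $n\geq 2$, so $K_{2,2}\subseteq\mathcal P(G)$, which is part (2).

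Next I would pin down the possible values of $m$ (and symmetrically $n$). If $n_p:=n_p(G)$ is the number of Sylow $p$-subgroups, then $m=n_p+t$, where $t$ is the number of subgroups of order $p$, and $n_p\equiv 1\pmod p$ with $n_p\mid q^2$. If $n_p\geq 2$ then $n_p\geq p+1$, so $m\geq p+2\geq 4$; if $n_p=1$ then the unique Sylow $p$-subgroup contains all subgroups of order $p$, so $t=1$ when it is $\mathbb Z_{p^2}$ and $t=p+1$ when it is $\mathbb Z_p\times\mathbb Z_p$, whence $m=2$ or $m=p+2\geq 4$. Thus $m\in\{2\}\cup\{4,5,\dots\}$, and $m=2$ holds exactly when $G$ has a normal cyclic Sylow $p$-subgroup; similarly for $n$. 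Consequently $m=n=2$ happens precisely when both Sylow subgroups of $G$ are normal and cyclic, i.e. when $G\cong\mathbb Z_{p^2q^2}$. Part (3) now follows from elementary facts about $K_{m,n}\cup\overline K_r$ with $m,n\geq 2$: it contains $K_{1,3}$, $K_{1,4}$ or $K_{2,3}$ if and only if $\max(m,n)\geq 3$; it is unicyclic (contains exactly one cycle) if and only if $K_{m,n}=C_4$, i.e. $m=n=2$; and since no value of $m$ or $n$ equals $3$, failure of $m=n=2$ forces $\max(m,n)\geq 4$. So conditions (a)--(e) of part (3) are equivalent.

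For part (1), recall $K_{m,n}$ is planar iff $\min(m,n)\leq 2$ (as $K_{3,3}$ is non-planar and $K_{2,k}$ is planar for every $k$), and adjoining isolated vertices preserves planarity; hence $\mathcal P(G)$ is planar iff $\min(m,n)=2$, i.e. iff $m=2$ or $n=2$, i.e. iff $G$ has a normal cyclic Sylow subgroup. It remains to enumerate the groups of order $p^2q^2$ with this property. By symmetry assume the normal cyclic Sylow subgroup is $\mathbb Z_{q^2}$; then $G=\mathbb Z_{q^2}\rtimes P$ with $P$ a Sylow $p$-subgroup, $P\cong\mathbb Z_{p^2}$ or $\mathbb Z_p\times\mathbb Z_p$, and the action is a homomorphism $P\to\mathrm{Aut}(\mathbb Z_{q^2})$ whose image is a $p$-subgroup of the cyclic group $\mathrm{Aut}(\mathbb Z_{q^2})$ of order $q(q-1)$. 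So the image is cyclic, of order dividing the $p$-part of $q-1$ (and equal to $1$ or $p$ when $P$ is elementary abelian). Running through these cases and identifying isomorphic semidirect products yields exactly the families $\mathbb Z_{p^2}\rtimes\mathbb Z_{q^2}$ and $\mathbb Z_{p^2}\rtimes(\mathbb Z_q\times\mathbb Z_q)$, together with the extra groups that occur for the small pair $(p,q)=(2,3)$, namely $\mathbb Z_9\rtimes\mathbb Z_4$ and $D_{18}$; one also checks that the listed groups are pairwise non-isomorphic, so the list is exact.

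The graph-theoretic half of the argument — the reduction to $K_{m,n}\cup\overline K_r$ and its consequences for planarity, forbidden complete bipartite subgraphs, and unicyclicity — is routine once the value restriction $m,n\in\{2\}\cup\{4,5,\dots\}$ is established. The main obstacle is the group-theoretic classification behind part (1): one must carefully list the groups of order $p^2q^2$ that possess a normal cyclic Sylow subgroup, keep track of which of the resulting semidirect products coincide, and in particular isolate the exceptional behaviour at $(p,q)=(2,3)$ so that the final list is precisely the one stated (compare the analogous classification invoked in Proposition~\ref{planarity of prime graph 5}).
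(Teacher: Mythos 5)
Your argument is correct, and it takes a genuinely different and more structural route than the paper. The paper runs through the classification of groups of order $p^2q^2$ (citing Lin's list, with the groups of order $36$ handled one by one), exhibiting an explicit $K_{3,3}$, $K_{2,3}$ or $K_{1,4}$ in each non-planar case and a bipartition in each planar one. You instead establish once that $\mathcal P(G)\cong K_{m,n}\cup\overline K_r$, where $m$ and $n$ count the subgroups of $p$-power and $q$-power order, and then use Sylow theory to show $m,n\in\{2\}\cup\{4,5,\dots\}$ with $m=2$ exactly when the Sylow $p$-subgroup is normal and cyclic. All seven assertions then reduce to elementary facts about complete bipartite graphs: part (2) is $\min(m,n)\geq 2$; planarity is $\min(m,n)=2$, i.e.\ some Sylow subgroup is normal and cyclic, and by Sylow/Schur--Zassenhaus such a $G$ is $\mathbb Z_{p^2}\rtimes\mathbb Z_{q^2}$ or $\mathbb Z_{p^2}\rtimes(\mathbb Z_q\times\mathbb Z_q)$ up to swapping the roles of $p$ and $q$ (with $\mathbb Z_9\rtimes\mathbb Z_4$ and $D_{18}$ being instances of these families); and the five conditions of part (3) coincide because the gap in the value set $\{2\}\cup\{4,5,\dots\}$ makes ``$\max(m,n)\geq 3$'', ``$\max(m,n)\geq 4$'' and ``not $m=n=2$'' equivalent. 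What your approach buys is uniformity: the exceptional pair $(p,q)=(3,2)$ needs no separate treatment for the planarity criterion, the abelian and non-abelian cases are not distinguished, and all the forbidden-subgraph and unicyclicity claims are read off simultaneously instead of group by group. The one step you leave as a sketch --- ``running through the cases and identifying isomorphic semidirect products'' --- is not actually needed at the level of detail you worry about, since the proposition only asks that $G$ be isomorphic to one of the listed forms rather than for a count of isomorphism classes; the observation that a normal cyclic Sylow subgroup forces $G\cong\mathbb Z_{r^2}\rtimes S$ already completes that direction.
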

\begin{proof}
We consider the following cases: 
	
\noindent \textbf{Case 1:} $G$ is non-abelian. We use the classification of groups of order $p^2q^2$ given in \cite{lin}. With out loss of generality, we assume that $p>q$. 
Let $P$ and $Q$ denote a Sylow $p$-subgroup and Sylow $q$-subgroup of $G$ respectively. By Sylow's theorem, $n_p=1, q, q^2$. But $n_p= q$ is not possible, since $p> q$. If $n_p= q^2$, then $p~|~(q+1)(q-1)$, this implies that $p~|~(q+1)$, which is true only
when $(p, q)= (3, 2)$.  When $(p, q) \neq (3, 2)$, then $G \cong P \rtimes Q$.
	
If $G \cong \mathbb Z_{p^2} \rtimes \mathbb Z_{q^2}= \langle a, b| a^{p^2}=b^{q^2}=1, bab^{-1}=a^i, i^{q^2} \equiv 1 (\mbox{mod}~ p^2) \rangle$, then 
$\mathcal P(G)$ is planar, since it is bipartite with one partition contains all the subgroups of $G$ whose order has $q$ as a divisor, and another partition contains the subgroups of $G$ order $p$, $p^2$; also $G$ has $p^2$ Sylow $q$-subgroups. It follows that $\mathcal P(G)$ contains $K_{2,3}$ and $K_{1,4}$ as subgraphs and so $\mathcal P(G)$ is not unicyclic.
	
If $G \cong \mathbb Z_{p^2} \times (\mathbb Z_q \times \mathbb Z_q)$, then  
$\mathcal P(G)$ is planar, since it is bipartite with one partition contains all the subgroups whose order has $q$ as a divisor, and another partition contains the subgroups of $G$ order $p$, $p^2$; also $G$ has $p^2$ Sylow $q$-subgroups. It follows that $\mathcal P(G)$ contains $K_{2,3}$ and $K_{1,4}$ as a subgraph and so $\mathcal P(G)$ is not unicyclic.
	
If $G \cong (\mathbb Z_p \times \mathbb Z_p) \rtimes \mathbb Z_{q^2}:=\langle a,b,c~|~a^p=b^p=c^{q^2}=1, ab=ba, cac^{-1}=a^ib^j, cbc^{-1}=a^kb^l\rangle$,
where $\bigl(\begin{smallmatrix}
i & j\\ k & l
\end{smallmatrix} \bigr)$ has order $q^2$ in $GL_2(p)$, then $\mathcal P(G)$ contains $K_{3,3}$ as a subgraph with bipartition 
$X:=\{H_1$, $H_2$, $H_3\}$ and $Y:=\{H_4$, $H_5$, $H_6\}$, where $H_1$, $H_2$, $H_3$, $H_4$, $H_5$, $H_6$ are subgroups of $G$ of order $p$, $p$, $p$, 
$q^2$, $q^2$, $q$ respectively. Also $G$ has $p^2$ Sylow $q$-subgroups, so $\mathcal P(G)$ contains $K_{1,4}$ as a subgraph.
	
If $G \cong (\mathbb Z_p \times \mathbb Z_p) \rtimes (\mathbb Z_q \times \mathbb Z_q)$,  
Then $\mathcal P(G)$ contains $K_{3,3}$ as a subgraph with bipartition 
$X:=\{H_1$, $H_2$, $H_3\}$ and $Y:=\{H_4$, $H_5$, $H_6\}$, where $H_1$, $H_2$, $H_3$, $H_4$, $H_5$, $H_6$ are subgroups of $G$ of order $p$, $p$, $p$, 
$q$, $q$, $q$ respectively. Also $G$ has $p^2$ Sylow $q$-subgroups, so $\mathcal P(G)$ contains $K_{1,4}$ as a subgraph.
	
\noindent If $(p, q)= (3, 2)$, then up to isomorphism, there are nine groups of order 36. In the following we consider each of these groups.
\begin{enumerate}[{\normalfont (a)}]
\item $G\cong D_{18}$. Here $G$ has unique subgroups of order 3 and 9 respectively; the order of remaining proper subgroups have 2 as their common divisor. Therefore, $\mathcal P(G)$ is planar, since it is bipartite with one partition contains  subgroups of order 3, 9, and another partition contains remaining proper subgroups of $G$. Since there are 18 subgroups of $D_{18}$ has order 2, so $\mathcal P(G)$ contains $K_{2,3}$ and $K_{1,4}$ as a subgraph.
		
\item $G\cong S_3\times S_3$. Let $H_1$, $H_2$, $H_3$, $H_4$, $H_5$, $H_6$, $H_7$ be subgroups of $G$ of order 3, 3, 9, 2, 2, 2, 2 respectively. Then $\mathcal P(G)$ contains $K_{3,3}$ as a subgraph with bipartition $X:=\{H_1, H_2, H_3\}$ and $Y:=\{H_4, H_5, H_6\}$. Also 
%$G$ has subgroups of order 2, 2, 2, 2, 3, let them be $H_1$, $H_2$, $H_3$, $H_4$, $H_5$ respectively. Then 
$\mathcal P(G)$ contains $K_{1,4}$ as a subgraph with bipartition $X:=\{H_4, H_5, H_6, H_7\}$ and $Y:=\{H_1\}$.
		
\item $G\cong \mathbb Z_3\times A_4$. Let $H_1$, $H_2$, $H_3$, $H_4$, $H_5$, $H_6$, $H_7$ be subgroups of $G$ of order 3, 3, 9, 2, 2, 2, 4 respectively. Then $\mathcal P(G)$ contains $K_{3,3}$ as a subgraph with bipartition $X:=\{H_1, H_2, H_3\}$ and $Y:=\{H_4$, $H_5$, $H_6\}$. Also 
%$G$ has subgroups of order 2, 2, 2, 4, 3, let them be $H_1$, $H_2$, $H_3$, $H_4$, $H_5$ respectively. Then 
$\mathcal P(G)$ contains $K_{1,4}$ as a subgraph with bipartition $X:=\{H_4, H_5, H_6, H_7\}$ and $Y:=\{H_1\}$.
		
\item $G\cong \mathbb Z_6\times S_3$. Let $H_1$, $H_2$, $H_3$, $H_4$, $H_5$, $H_6$, $H_7$ be subgroups of $G$ of order 3, 3, 9, 2, 2, 2, 2 respectively. Here $\mathcal P(G)$ contains $K_{3,3}$ as a subgraph with bipartition $X:=\{H_1, H_2, H_3\}$ and $Y:=\{H_4$, $H_5$, $H_6\}$. Also 
%$G$ has subgroups of order 2, 2, 2, 2, 3, let them be $H_1$, $H_2$, $H_3$, $H_4$, $H_5$ respectively. Then 
$\mathcal P(G)$ contains $K_{1,4}$ as a subgraph with bipartition $X:=\{H_4, H_5, H_6, H_7\}$ and $Y:=\{H_1\}$.
		
\item $G\cong \mathbb Z_9\rtimes\mathbb Z_4=\langle a,b~|~a^9=b^4=1, bab^{-1}=a^i, i^4\equiv 1(\mbox{mod}~ 9)\rangle$, then 
$\mathcal P(G)$ is planar, since it is bipartite with one partition contains all the subgroups whose order has $p$ as a divisor, and another partition contains the subgroups of order $q$, $q^2$. It follows that $\mathcal P(G)$ contains $K_{2,3}$ as a subgraph. Also $G$ has nine subgroups of order 4. So $\mathcal P(G)$ contains $K_{1,4}$ as a subgraph with bipartition $X:=\{H_1, H_2, H_3, H_4\}$ and $Y:=\{H_5\}$, where $H_1$, $H_2$, $H_3$, $H_4$, $H_5$ are subgroups of $G$ of order 4, 4, 4, 2, 3 respectively.
		
\item $G\cong \mathbb Z_3\times(\mathbb Z_3\rtimes \mathbb Z_4)=\langle a,b,c~|~a^3=b^3=c^4=1, ab=ba, ac=ca, cbc^{-1}=b^i,
\mbox{ord}_2(i)=3\rangle$. Let $H_1$, $H_2$, $H_3$, $H_4$, $H_5$, $H_6$, $H_7$ be subgroups of $G$ of order 3, 3, 9, 2, 4, 4, 4 respectively. Then $\mathcal P(G)$ contains $K_{3,3}$ as a subgraph with bipartition $X:=\{H_1, H_2, H_3\}$ and $Y:=\{H_4$, $H_5$, $H_6\}$. Also 
%$G$ has subgroups of order 2, 4, 4, 4, 3, let them be $H_1$, $H_2$, $H_3$, $H_4$, $H_5$ respectively. Then 
$\mathcal P(G)$ contains $K_{1,4}$ as a subgraph with bipartition $X:=\{H_4, H_5, H_6, H_7\}$ and $Y:=\{H_1\}$.
		
\item $G\cong (\mathbb Z_3\times \mathbb Z_3)\rtimes \mathbb Z_4:=\langle a,b,c~|~a^3=b^3=c^4=1, ab=ba, cac^{-1}=a^ib^j, cbc^{-1}=a^kb^l\rangle$,
where $\bigl(\begin{smallmatrix}
i & j\\ k & l
\end{smallmatrix} \bigr)$ has order $4$ in $GL_2(3)$. Let $H_1$, $H_2$, $H_3$, $H_4$, $H_5$, $H_6$, $H_7$ be subgroups of $G$ of order 3, 3, 3, 9, 2, 4, 4 respectively. Here $\mathcal P(G)$ contains $K_{3,3}$ as a subgraph with bipartition $X:=\{H_1, H_2, H_3\}$ and $Y:=\{H_5$, $H_6$, $H_7\}$. Also 
%$G$ has subgroups of order 3, 3, 3, 3, 4, let them be $H_1$, $H_2$, $H_3$, $H_4$, $H_5$ respectively. Then 
$\mathcal P(G)$ contains $K_{1,4}$ as a subgraph with bipartition $X:=\{H_1, H_2, H_3, H_4\}$ and $Y:=\{H_5\}$.
		
\item $G\cong (\mathbb Z_2\times (\mathbb Z_3\times \mathbb Z_3))\rtimes \mathbb Z_2$. Let $H_1$, $H_2$, $H_3$, $H_4$, $H_5$, $H_6$, $H_7$ be subgroups of $G$ of order 3, 3, 3, 9, 2, 2, 2 respectively. Here $\mathcal P(G)$ contains $K_{3,3}$ as a subgraph with bipartition $X:=\{H_1, H_2, H_3\}$ and $Y:=\{H_5$, $H_6$, $H_7\}$. Also 
%$G$ has subgroups of order 3, 3, 3, 3, 2, let them be $H_1$, $H_2$, $H_3$, $H_4$, $H_5$ respectively. Then 
$\mathcal P(G)$ contains $K_{1,4}$ as a subgraph with bipartition $X:=\{H_1, H_2, H_3, H_4\}$ and $Y:=\{H_5\}$.
		
\item $G\cong (\mathbb Z_2\times \mathbb Z_2)\rtimes \mathbb Z_9$. Let $H_1$, $H_2$, $H_3$, $H_4$, $H_5$, $H_6$, $H_7$ be subgroups of $G$ of order 3, 3, 9, 2, 2, 2, 4 respectively. Here $\mathcal P(G)$ contains $K_{3,3}$ as a subgraph with bipartition $X:=\{H_1, H_2, H_3\}$ and $Y:=\{H_4$, $H_5$, $H_6\}$. Also 
%$G$ has subgroups of order 2, 2, 2, 4, 3, let them be $H_1$, $H_2$, $H_3$, $H_4$, $H_5$ respectively. Then 
$\mathcal P(G)$ contains $K_{1,4}$ as a subgraph with bipartition $X:=\{H_4, H_5, H_6, H_7\}$ and $Y:=\{H_5\}$.
\end{enumerate}
	
\noindent \textbf{Case 2:} $G$ is abelian. If $G\cong \mathbb Z_{p^2q^2}$, then it is easy to see that $\mathcal P(G)\cong K_{2,2}\cup \overline{K}_3$, which 
 is planar, $K_{1,3}$-free and it contains $K_{2,2}$ as a subgraph. 

If $G\cong \mathbb Z_{p^2q}\times \mathbb Z_q$, 
then $\mathcal P(G)$ is planar, since $\mathcal P(G)$ is bipartite with one partition contains all the subgroups whose order has $p$ as a divisor, and another partition contains the subgroups of order $q$, $q^2$. It follows that $\mathcal P(G)$ contains $K_{2,3}$ as a subgraph. Also $\mathcal P(G)$ contains $K_{1,4}$ as a subgraph with bipartition $X:=\{H_1, H_2, H_3, H_4\}$ and $Y:=\{H_5\}$, where $H_1$, $H_2$, $H_3$, $H_4$, $H_5$ are subgroups of $G$ of order $q$, $q$, $q$, $q^2$, $p$ respectively.
	
If $G\cong \mathbb Z_{pq}\times \mathbb Z_{pq}$, then $\mathcal P(G)$ contains $K_{3,3}$ as a subgraph with bipartition 
$X:=\{H_1$, $H_2$, $H_3\}$ and $Y:=\{H_4$, $H_5$, $H_6\}$, where $H_1$, $H_2$, $H_3$, $H_4$, $H_5$, $H_6$, $H_7$ are subgroups of $G$ of order $p$, $p$, $p$, 
$q$, $q$, $q$, $q^2$ respectively. Also $\mathcal P(G)$ contains $K_{1,4}$ as a subgraph with bipartition $X:=\{H_4, H_5, H_6, H_7\}$ and $Y:=\{H_1\}$.

The proof follows by combining all the  above cases together.
\end{proof}

\begin{pro}\label{planarity of prime graph 7}
Let $G$ be a group of order $p^\alpha q$. Then
\begin{enumerate}[\normalfont (1)]
\item $\mathcal P(G)$ is planar if and only if $G$ is one of $\mathbb Z_q\times P$, $\mathbb Z_q\rtimes P$,  where $P$ is a  $p$-group, $\langle a, b, c~|~a^q=b^p=c^p=1, ac=ca, bc=cb, bab^{-1}=a^i, ord_q(i)=p\rangle$, $\mathbb Z_q\rtimes_2 \mathbb Z_{p^2}$, $\mathbb Z_{p^2}\rtimes \mathbb Z_q$ or $D_{12}$;
\item The following are equivalent:
\begin{enumerate}[\normalfont (a)]
\item  $G$ is  one of $\mathbb Z_q\times P$, $\mathbb Z_q\rtimes P$,  where $P$ is a  $p$-group, $\langle a, b, c~|~a^q=b^p=c^p=1, ac=ca, bc=cb, bab^{-1}=a^i, ord_q(i)=p\rangle$, $\mathbb Z_q\rtimes_2 \mathbb Z_{p^2}$ or $D_{12}$;
\item $\mathcal P(G)$ is $K_{2,3}$-free; 
\item $\mathcal P(G)$ is $K_{2,2}$-free.
\end{enumerate}
\item $\mathcal P(G)$ is $K_{1,4}$-free if and only if $G$ is  one of $\mathbb Z_{p^\alpha q}(\alpha=1, 2, 3)$ or $S_3$.
\item $\mathcal P(G)$ is $K_{1,3}$-free if and only if $G$ either $\mathbb Z_{pq}$ or $\mathbb Z_{p^2q}$;
\item $\mathcal P(G)$ is $K_{1,2}$-free if and only if $G \cong \mathbb Z_{pq}$;
\item $\mathcal P(G)$ is acyclic.
\end{enumerate}  

\end{pro}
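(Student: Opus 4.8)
The plan is to replace $\mathcal P(G)$ by an explicit graph and then read off all seven assertions. If $|G|=p^{\alpha}q$, then by Lagrange every proper nontrivial subgroup $H$ of $G$ has order $p^{a}$ with $1\le a\le\alpha$, or order $q$, or order $p^{a}q$ with $1\le a<\alpha$. Any two subgroups of $p$-power order share the prime $p$, any two subgroups of order $q$ share $q$, and a subgroup of order $p^{a}q$ shares a prime with every vertex; on the other hand a nontrivial $p$-subgroup and a Sylow $q$-subgroup always have coprime orders. Hence
\[
\mathcal P(G)\cong K_{m,n}\cup\overline{K}_{r},
\]
where $m$ is the number of nontrivial $p$-subgroups of $G$, $n=n_{q}(G)$ is the number of Sylow $q$-subgroups, and $r$ is the number of proper subgroups of $G$ of order divisible by $pq$. (This agrees with the values recorded earlier, e.g.\ $\mathcal P(D_{12})\cong K_{1,10}\cup\overline{K}_{3}$ and $\mathcal P(A_{4})\cong K_{4,4}$.) Since adjoining isolated vertices changes none of the listed properties, every part of the proposition becomes a statement about $K_{m,n}$.

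I would then translate the seven statements into inequalities on $(m,n)$: by Kuratowski's theorem $K_{m,n}$ is planar iff $\min(m,n)\le 2$; it is $K_{1,t}$-free iff $\max(m,n)\le t-1$; it contains $K_{2,2}$ iff $m\ge 2$ and $n\ge 2$; it contains $K_{2,3}$ iff one part has at least $3$ vertices and the other at least $2$; and it is unicyclic iff it equals $C_{4}$, i.e.\ $m=n=2$. The one arithmetic input needed is that $n_{q}(G)\ne 2$, since $n_{q}(G)\equiv 1\pmod q$ with $q\ge 2$; thus $n\in\{1\}\cup\{3,4,\dots\}$. Consequently the case $m=n=2$ never occurs, so $\mathcal P(G)$ is never unicyclic, which is part (6); the conditions ``$K_{2,2}$-free'', ``$K_{2,3}$-free'' and ``$\min(m,n)\le 1$'' coincide, which gives the equivalence in part (2); and ``$n\le 2$'' is the same as ``$n=1$'', i.e.\ the Sylow $q$-subgroup of $G$ is normal, in which case $\mathcal P(G)$ is a star together with isolated vertices and hence planar.

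It remains to match the $(m,n)$-conditions to the lists of groups, and this is where the classification of groups of order $p^{\alpha}q$ is used. On the $n$-side: $n=1$ iff the Sylow $q$-subgroup $Q$ is normal, in which case $G=Q\rtimes P$ for any Sylow $p$-subgroup $P$ (as $Q\cap P=1$ and $|QP|=|G|$), and running through the $p$-groups $P$ and the homomorphisms $P\to\mathrm{Aut}(\mathbb Z_q)$ yields exactly $\mathbb Z_q\times P$, $\mathbb Z_q\rtimes P$, $\mathbb Z_q\rtimes_2\mathbb Z_{p^2}$, the group $\langle a,b,c\mid a^{q}=b^{p}=c^{p}=1,\ ac=ca,\ bc=cb,\ bab^{-1}=a^{i},\ \mathrm{ord}_{q}(i)=p\rangle$, and $D_{12}$. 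On the $m$-side: a short argument with Sylow's theorem and subgroup counts in $p$-groups shows that $G$ has few nontrivial $p$-subgroups only in narrow circumstances --- if $n_{p}(G)>1$, or $P$ is noncyclic, or $|P|\ge p^{4}$, then $m\ge 4$ apart from the single small group $S_{3}$ --- so $m\le 3$ forces ($n_{p}(G)=1$ and $P\cong\mathbb Z_p,\mathbb Z_{p^2}$ or $\mathbb Z_{p^3}$) or $G\cong S_3$, and $m\le 2$ forces $P\cong\mathbb Z_p$ or $\mathbb Z_{p^2}$. Combining the two analyses --- and invoking the explicit classification of groups of order $p^{2}q$ (used already in the proof of Theorem~\ref{order prime graph of subgroups of groups 6}) and of the coincidental small orders --- produces, for each property, precisely the stated list; conversely each named group is checked by directly counting its $p$- and $q$-subgroups, e.g.\ $\mathcal P(\mathbb Z_{p^{\alpha}q})\cong K_{1,\alpha}\cup\overline{K}_{\alpha-1}$, $\mathcal P(S_3)\cong K_{1,3}$, and $\mathcal P(\mathbb Z_{p^{2}}\rtimes\mathbb Z_{q})\cong K_{2,p^{2}}\cup\overline{K}_{p^{2}}$.

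The hard part will be precisely this last matching step: a complete and careful passage through the groups of order $p^{\alpha}q$ for \emph{all} $\alpha$, keeping track of $n_{p}(G)$, $n_{q}(G)$ and the number of subgroups of each $p$-power order. It divides into the generic situation (both Sylow subgroups unique and cyclic, so $G$ is cyclic and the counts are immediate) and a finite list of low orders --- essentially order $p^{2}q$ and the cases $(p,q)=(2,3)$ --- where one appeals to the explicit enumeration of groups; a further subtlety is organizing the semidirect-product families (and the labeling of the two primes $p$, $q$) so that no group is listed twice or omitted. Once the reduction $\mathcal P(G)\cong K_{m,n}\cup\overline{K}_r$ is established, the graph-theoretic content of the proposition is routine.
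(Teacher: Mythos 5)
Your reduction $\mathcal P(G)\cong K_{m,n}\cup\overline{K}_r$ (with $m$ the number of nontrivial $p$-subgroups, $n=n_q(G)$, $r$ the number of proper subgroups of order divisible by $pq$) is correct, and it is a genuinely different and more economical organization than the paper's: the paper never isolates this structural fact, and instead re-derives it implicitly inside a long case analysis on $\alpha$ and on the isomorphism type of $G$, exhibiting an explicit $K_{3,3}$ or $K_{1,4}$ in each non-planar case. Your translation of every listed property into inequalities on $(m,n)$, together with the observation $n_q\equiv 1\pmod q$ (so $n\neq 2$), cleanly yields the equivalence of $K_{2,2}$-freeness and $K_{2,3}$-freeness and the impossibility of a unicyclic $\mathcal P(G)$ --- things the paper obtains only by inspection. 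What your write-up buys in clarity it defers in labor: the entire content of the ``if and only if'' lists still sits in the final matching step, which you only sketch. In particular, ``$m\le 2$ forces $n_p=1$ and $P$ cyclic of order $\le p^2$'' and ``$m\le 3$ forces $P\cong\mathbb Z_{p^{\alpha}}$ with $\alpha\le 3$ or $G\cong S_3$'' need the standard count that a noncyclic $p$-group of order $\ge p^2$ has at least $p+1$ subgroups of order $p$, and the $n=1$ case needs the same classification of groups of order $p^2q$ (and of order $36$, $12$) that the paper invokes; none of this would fail, but it is where the proof actually lives, exactly as in the paper.

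Two small points. First, part (6) as literally stated (``$\mathcal P(G)$ is acyclic'') is false --- $\mathcal P(A_4)\cong K_{4,4}$ contains $C_4$ --- and what is actually needed for Theorem~\ref{planarity of prime graph 9}(7) is that $\mathcal P(G)$ is never unicyclic; your argument via $n_q\neq 2$ proves precisely that corrected statement. Second, for $G\cong\mathbb Z_{p^2}\rtimes\mathbb Z_q$ the number of isolated vertices is $p$ (the subgroups of order $pq$), not $p^2$; this is immaterial to every property under discussion since isolated vertices affect none of them.
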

\begin{proof} Proof is divided into several cases.\\
\noindent \textbf{Case 1:} $\alpha\geq 3$.

 Suppose Sylow $q$-subgroup of $G$ is not unique, then $G$ has subgroups of order $p$, $p^2$, $p^3$, $q$, $q$, $q$, let them be $H_1$, $H_2$, $H_3$, $H_4$, $H_5$, $H_6$ respectively. Here $\mathcal P(G)$ contains $K_{3,3}$ as a subgraph with bipartition $X:=\{H_1, H_2, H_3\}$ and $Y:=\{H_4, H_5, H_6\}$.
	
Suppose Sylow $q$-subgroup is unique, then $G\cong Z_p\times P $ or $Z_q\rtimes P$, where $P$ is a Sylow $p$-subgroup of $G$ and so $\mathcal P(G)$ is bipartite with one partition contains a subgroup whose order is $q$ and another partition contains remaining proper subgroups of $G$. Hence $\mathcal P(G)$ is planar and $K_{2,2}$-free. 

Now we check the $K_{1,4}$-freeness of $\mathcal P(G)$. If $\alpha\geq 4$, then $G$ has subgroups of order $p$, $p^2$, $p^3$, $p^4$, $q$, let them be $H_1$, $H_2$, $H_3$, $H_4$, $H_5$ respectively. Then $\mathcal P(G)$ contains $K_{1,4}$ as a subgraph with bipartition $X:=\{H_1, H_2, H_3, H_4\}$ and $Y:=\{H_5\}$.

Let $\alpha=3$. Suppose Sylow $p$-subgroup of $G$ is not unique, then $G$ has subgroups of order $p$, $p^2$, $p^3$, $p^3$, $q$, let them be $H_1$, $H_2$, $H_3$, $H_4$, $H_5$ respectively. Then $\mathcal P(G)$ contains $K_{1,4}$ as a subgraph with bipartition $X:=\{H_1, H_2, H_3, H_4\}$ and $Y:=\{H_5\}$.

Suppose Sylow $p$-subgroup of $G$ is unique and cyclic, then $G$ has a unique subgroups of order $p$, $p^2$, $p^3$, let them be $H_1$, $H_2$, $H_3$, and so $\mathcal P(G)$ is a bipartite graph with one partition contains $H_1$, $H_2$, $H_3$ and another partition contains remaining proper subgroups of $G$. Hence $\mathcal P(G)$ is $K_{1,4}$-free. But $\mathcal P(G)$ contains $K_{1,3}$ as a subgraph with bipartition $X:=\{H_1, H_2, H_3\}$ and $Y:=\{H_4\}$, where $H_4$ is a subgroup of order $q$.
	
\noindent \textbf{Case 2:} $\alpha=2$, $\beta=1$. 
	
Suppose $G$ is cyclic, then $G$ has only four proper subgroups and their orders are $p^2$, $p$, $q$, $pq$ respectively. Therefore, $\mathcal P(G)\cong K_{1,2}\cup K_1$, which is planar and $K_{1,2}$-free. 
	
If $G$ is non-cyclic abelian, then $G$ has $p+1$ subgroups of order $p$; unique subgroup of order $p^2$; unique subgroup of order $q$; $p+1$ subgroups of order $pq$. Also these are the only proper subgroups of $G$. It follows that $\mathcal P(G)\cong K_{1,p+2}\cup \overline{K}_{p+1}$ and hence $\mathcal P(G)$ is planar; it contains $K_{1,4}$ as a subgraph and it is $K_{2,2}$-free.
	
If $G$ is non-abelian,    
then we need to consider the list of groups of order $p^2q$ used in the proof of Theorem~\ref{order prime graph of subgroups of groups 6}. 

\begin{enumerate}[\normalfont (a)]
%\item $G_1:= \mathbb Z_q \rtimes \mathbb Z_{p^2} = \langle a, b | a^q= b^{p^2}=1, bab^{-1}=a^i, ord_q(i)=p \rangle$. Here $G_1$ has an unique subgroup of order $q$; unique subgroup of order $pq$; $q$ subgroups of order $p^2$ and unique subgroups of order $p$; also these are the only proper subgroups of $G_1$. Therefore, $\mathcal P(G_1)\cong K_{1,q+1}\cup K_1$ and 
\item From the structure of $\mathcal P(G_1)$, it follows that $\mathcal P(G_1)$ is planar,  $K_{2,2}$-free and it contains $K_{1,4}$. 

\item Note that $G_2$ has unique subgroup of order $q$.  $\mathcal P(G_2)$ is planar, since $\mathcal P(G_2)$ is bipartite with one partition contains all the subgroups whose order has $p$ as a divisor, and another partition contains the subgroup of order $q$. So $\mathcal P(G_2)$ contains $K_{1,4}$ as a subgraph with bipartition $X:=\{H_1, H_2, H_3, H_4\}$ and $Y:=\{H_5\}$, where $H_1$, $H_2$, $H_3$, $H_4$, $H_5$ are subgroups of $G$ of order $p$, $p$, $p$, $p^2$, $q$ respectively. Clearly $\mathcal P(G_2)$ is $K_{2,2}$-free.

%\item $G_3:= \mathbb Z_q
%\rtimes_2 \mathbb Z_{p^2} = \langle a, b | a^q= b^{p^2}=1, bab^{-1}=a^i, ord_q(i)=p^2 \rangle$. Here $G_3$ has unique subgroup of order $q$; unique subgroup of order $pq$; $q$ subgroups of order $p^2$, and $q$ subgroups of order $p$. Also these are the only proper subgroups of $G_3$. 
%Then $\mathcal P(G_3)$ is planar, since $\mathcal P(G_3)$ is bipartite with one partition contains all the subgroups whose order has $p$ as a divisor, and another partition contains the subgroup of order $q$. So $\mathcal P(G_3)$ contains $K_{1,4}$ as a subgraph with bipartition $X:=\{H_1, H_2, H_3, H_4\}$ and $Y:=\{H_5\}$, where $H_1$, $H_2$, $H_3$, $H_4$, $H_5$ are subgroups of $G$ of order $p$, $p$, $p$, $p^2$, $q$ respectively. 
\item  From the structure of $\mathcal P(G_3)$, it follows that $\mathcal P(G_3)$ is planar,  $K_{2,2}$-free and it contains $K_{1,4}$.

%\item $G_4:=\langle a,b | a^{p^2}= b^q=1, bab^{-1}, ord_{p^2}(i)=q \rangle$. Here $G_4$ has a unique subgroup of order $p^2$; unique subgroup of order $p$; $p$ subgroups of order $pq$; $p^2$ subgroups of order $q$. Also these are the only proper subgroups of $G_4$. 
%Then $\mathcal P(G_4)$ is planar, since $\mathcal P(G_4)$ is bipartite with one partition contains all the subgroups whose order has $q$ as a divisor, and another partition contains the subgroups of order $p$, $p^2$; 
\item From the structure of $\mathcal P(G_4)$, it follows that $\mathcal P(G_4)$ is planar,   and it 
 contains $K_{2,3}$, $K_{1,4}$ as a subgraph. 
%Also $\mathcal P(G)$ contains  as a subgraph with bipartition $X:=\{H_1, H_2, H_3, H_4\}$ and $Y:=\{H_5\}$, where $H_i$, $i = 1,2, \ldots , 5$ are subgroups of $G_4$ of orders $q$, $q$, $q$, $q$, $p$ respectively.

%\item  $\langle a, b, c | a^p=b^p=c^q=1, cac^{-1}=a^i, cbc^{-1}=b^{i^t}, ab=ba, ord_p(i)=q \rangle$. There are $(q+3)/2$
%isomorphism types in this family (one for $t=0$ and one for each pair $\{ x, x^{-1} \}$ in $F_p^{\times}$. We will refer to all of these groups as $G_{5(t)}$
%of order $p^2q$. 
\item  $G_{5(t)}$ has at least three 
subgroups of order $p$, and at least three subgroups of order $q$. So $\mathcal P(G_{5(t)})$ contains $K_{3,3}$ as a subgraph with bipartition $X:=\{H_1, H_2, H_3\}$ and $Y:=\{H_4, H_5, H_6\}$, where $H_i$, $i=1$, 2, 3 and $H_j$, $j=4$, 5, 6 are subgroups of $G$ of order $p$, $p$, $p$, $q$, $q$, $q$ respectively. Also $\mathcal P(G_{5(t)})$ contains $K_{1,4}$ as a subgraph with bipartition $X:=\{H_1, H_2, H_3, H_7\}$ and $Y:=\{H_4\}$, where $H_7$ is a subgroup of $G$ of order $p^2$.

%\item $G_6:= (\mathbb Z_p \times \mathbb Z_p) \rtimes \mathbb Z_q
%= \langle a, b,c| a^p=b^p=c^q=1, ab=ba, cac^{-1}=a^ib^j, cbc^{-1}=a^kb^l\rangle$, where $\bigl(\begin{smallmatrix}
%i & j\\ k & l
%\end{smallmatrix} \bigr)$ has order $q$ in $GL_2(p)$. Here $G_6$ has at least three 
%subgroups of order $p$, and at least three subgroups of order $q$. 
\item From the structure of $\mathcal P(G_6)$, it follows that $\mathcal P(G_6)$ contains $K_{3,3}$ as a subgraph, so it is non-planar; also it contains $K_{1,4}$ as a subgraph.
% with bipartition $X:=\{H_1, H_2, H_3\}$ and $Y:=\{H_4, H_5, H_6\}$, where  $H_i$, $i=1$, 2, 3 and $H_j$, $j=4$, 5, 6 are subgroups of $G$ of order $p$, $p$, $p$, $q$, $q$, $q$ respectively. Also $\mathcal P(G_6)$ contains $K_{1,4}$ as a subgraph with bipartition $X:=\{H_1, H_2, H_3, H_7\}$ and $Y:=\{H_4\}$, where $H_7$ is a subgroup of $G$ of order $p^2$.
\item From the structure of $\mathcal P(D_{12})$, it follows that $\mathcal P(D_{12})$ is planar, $K_{2,2}$-free and it contains $K_{1,4}$ as a subgraph. Also from the structure of $\mathcal P(A_4)$, it follows that $\mathcal P(A_4)$ contains $K_{3,3}$ and $K_{1,4}$ as a subgraph.
\end{enumerate}

\noindent \textbf{Case 3:} Let $\alpha=\beta=1$. Then $G$ has a unique subgroup of order $q$, let it be $H$; it has $q$ subgroups of order $p$; also these are the only proper subgroups of $G$. It follows that $\mathcal P(G)\cong K_{1,q}$, which is planar, acyclic and $K_{2,2}$-free; it is $K_{1,4}$-free if and only if $q=3$; it contains $K_{1,3}$. 
 
%\begin{figure}[ht]
%\begin{center}
%\includegraphics[scale =0.8]{2.pdf}
%\caption{$\mathcal P(S_3)$}
%\label{fig: f2}
%\end{center}
%\end{figure}

The proof follows by combining all the above cases together.
\end{proof}

\begin{pro}\label{planarity of prime graph 8}
If $G$ is a $p$-group, then $\mathcal P(G)$ is planar, $K_{2,3}$-free, $K_{2,2}$-free, $K_{1,4}$-free, $K_{1,3}$-free, $K_{1,2}$-free and acyclic.
\end{pro}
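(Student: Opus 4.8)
The plan is to reduce the entire statement to one structural observation: a $p$-group has exactly one prime divisor. Writing $|G|=p^\alpha$ (with $\alpha\geq 2$, since by our standing convention $G$ is neither trivial nor of prime order), we have $\pi(G)=1$, so Corollary~\ref{c1}(1) applies directly and yields that $\mathcal P(G)$ is totally disconnected; equivalently, the edge set of $\mathcal P(G)$ is empty.

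Once we know $\mathcal P(G)$ has no edges, every assertion follows at once. A graph with empty edge set can be drawn in the plane simply by placing its vertices anywhere with no lines at all, so $\mathcal P(G)$ is planar. It contains no cycle whatsoever, so it is acyclic. Moreover, each of the graphs $K_{2,3}$, $K_{2,2}$, $K_{1,4}$, $K_{1,3}$, $K_{1,2}$ has at least one edge, so a graph with no edges cannot contain any of them as a subgraph; hence $\mathcal P(G)$ is $K_{2,3}$-free, $K_{2,2}$-free, $K_{1,4}$-free, $K_{1,3}$-free and $K_{1,2}$-free. One can phrase all of this in a single sentence after invoking Corollary~\ref{c1}(1).

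There is essentially no obstacle in this proof; the only point to note is the trivial remark that each forbidden subgraph listed is nonempty, which is obvious since even the smallest of them, $K_{1,2}$, already has two edges. All the real content was carried out in Corollary~\ref{c1}(1), and this proposition merely records the planarity, acyclicity, and forbidden-subgraph consequences of total disconnectedness, so that $p$-groups can be slotted cleanly into the classifications of Theorem~\ref{planarity of prime graph 9}.
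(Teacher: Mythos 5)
Your proof is correct and follows essentially the same route as the paper: both reduce the claim to the observation that every proper subgroup of a $p$-group has order a power of $p$, so $\mathcal P(G)$ is totally disconnected, from which planarity, acyclicity, and all the forbidden-subgraph conditions follow trivially. The only cosmetic difference is that you cite Corollary~\ref{c1}(1) while the paper restates the one-line argument directly.
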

\begin{proof}
If $G$ is a $p$-group, then order of every subgroup of $G$ is power of $p$ and so no two subgroups are adjacent in $\mathcal P(G)$. Thus $\mathcal P(G)$ is totally disconnected and this completes the proof.
\end{proof}

Putting together all the Propositions proved so for in this section, we obtain the  main Theorem \ref{order prime graph of subgroups 9}.
%\section{Main Results}

%\begin{cor}\label{planarity of prime graph c1}
%Let $G$ be a finite group. Then
%\begin{enumerate}[\normalfont (1)]
%
%\end{enumerate}
%\end{cor}
%\begin{proof}
%	If a graph contains $K_{2,3} $ as a subgraph, then obviously it contains $K_{2,2} $, $K_{1,3} $, $K_{1,2} $ as a subgraphs. So to classify finite groups whose coprime graphs are one of $K_{2,2} $-free, $K_{1,3} $-free or $K_{1,2} $-free, it is enough to consider the groups whose coprime graphs are $K_{2,3} $-free. 
%\end{proof}

\end{document}